\title{The local symmetry condition in the Heisenberg group}
\author{Tuomas Orponen}
\address{Department of Mathematics and Statistics, University of Helsinki, Gustaf H\"allstr\"ominkatu 2b, 00014 University of Helsinki, Finland}
\email{tuomas.orponen@helsinki.fi}
\date{\today}
\subjclass[2010]{42B20 (Primary) 28A75 (Secondary)}
\thanks{T.O. was supported by the Academy of Finland via the Research Fellowship project \emph{Quantitative rectifiability in Euclidean and non-Euclidean spaces}, grant no. 309365.}
\keywords{Singular integrals, Heisenberg group, $\beta$-numbers}
\newcommand{\R}{\mathbb{R}}
\newcommand{\W}{\mathbb{W}}
\newcommand{\He}{\mathbb{H}}
\newcommand{\N}{\mathbb{N}}
\newcommand{\Z}{\mathbb{Z}}
\newcommand{\calH}{\mathcal{H}}
\newcommand{\spt}{\operatorname{spt}}
\newcommand{\diam}{\operatorname{diam}}
\newcommand{\dist}{\operatorname{dist}}
\def\Barint_#1{\mathchoice
          {\mathop{\vrule width 6pt height 3 pt depth -2.5pt
                  \kern -8pt \intop}\nolimits_{#1}}%
          {\mathop{\vrule width 5pt height 3 pt depth -2.6pt
                  \kern -6pt \intop}\nolimits_{#1}}%
          {\mathop{\vrule width 5pt height 3 pt depth -2.6pt
                  \kern -6pt \intop}\nolimits_{#1}}%
          {\mathop{\vrule width 5pt height 3 pt depth -2.6pt
                  \kern -6pt \intop}\nolimits_{#1}}}
\numberwithin{equation}{section}
\theoremstyle{plain}
\newtheorem{thm}[equation]{Theorem}
\newtheorem{lemma}[equation]{Lemma}
\newtheorem{ex}[equation]{Example}
\newtheorem{cor}[equation]{Corollary}
\newtheorem{proposition}[equation]{Proposition}
\newtheorem{problem}{Problem}
\theoremstyle{definition}
\newtheorem{definition}[equation]{Definition}
\theoremstyle{remark}
\newtheorem{remark}[equation]{Remark}
\begin{document}

\begin{abstract} I propose an analogue in the first Heisenberg group $\He$ of David and Semmes' local symmetry condition (LSC). For closed $3$-regular sets $E \subset \He$, I show that the (LSC) is implied by the $L^{2}(\calH^{3}|_{E})$ boundedness of $3$-dimensional singular integrals with horizontally antisymmetric kernels, and that the (LSC) implies the weak geometric lemma for vertical $\beta$-numbers.  
\end{abstract}

\maketitle

\tableofcontents

\section{Introduction} Developing a theory of uniformly rectifiable sets in Heisenberg groups has lately attracted some attention: for results on sets of dimension $1$, see for example \cite{MR3456155,MR3512421,MR2789375,MR3678492,2018arXiv180405646C}, and for results on sets of co-dimension $1$, see \cite{CFO, CFO2, FOR, NY}. This note concerns sets of co-dimension $1$, hence dimension $3$, in the first Heisenberg group $\He$. Here are some of the basic problems motivating the research:
\begin{problem}\label{Q1} Identify natural $3$-dimensional singular integral operators in $\He$. Find necessary and sufficient conditions for $3$-regular subsets $E \subset \He$ which ensure that such operators are bounded on $L^{2}(\calH^{3}|_{E})$.
\end{problem}

In $\R^{n}$, similar questions have been studied since the $70$'s: an early result in the field is the proof by Calder\'on \cite{Calderon} and Coifman-McIntosh-Meyer \cite{CMM} that the Cauchy transform -- a $1$-dimensional singular integral operator in the plane -- is $L^{2}$-bounded on Lipschitz graphs. In higher (Euclidean) dimensions, Problem \ref{Q1} was studied extensively by David \cite{MR956767}, Semmes \cite{Semmes} and David-Semmes \cite{DS1,DS2,DS3} in the 80's and 90's, and, for example, by Tolsa \cite{MR2481953} and Nazarov-Tolsa-Volberg \cite{MR3286036} in the 2000's. This list of references is far from complete! Roughly speaking, the most natural singular integrals to consider in $\R^{n}$ are the ones with antisymmetric kernels, and they are bounded on $L^{2}(E)$ if and only if $E$ is uniformly rectifiable. This is evidently not a rigorous statement: for more precise ones, see the various characterisations of uniform rectifiability in \cite{DS1}.  

In $\He$, what are the natural analogues of singular integrals with antisymmetric kernels? The notion of antisymmetry can be directly translated to $\He$ by requiring that
\begin{equation}\label{antisymmetry} K(p^{-1}) = -K(p), \qquad p \in \He \setminus \{0\}. \end{equation}
It also seems that singular integrals with antisymmetric kernels are suitable for studying geometric problems: for example, the direct analogues in $\He^{n}$ of the $s$-dimensional Riesz kernels in $\R^{n}$ were studied by Chousionis and Mattila in \cite{Chousionis2011}. A special case of their result says that if the associated singular integral is $L^{2}$-bounded on an $s$-regular set $E \subset \He$, then $s \in \{1,2,3\}$, and $\calH^{s}|_{E}$ has some flat tangent measures. Conversely, the main result in \cite{CFO2} applies to these singular integrals with $s = 3$, and guarantees their boundedness on some smooth (intrinsic) graphs in $\He$.

There is, however, one issue with this approach: the kernel
\begin{displaymath} p \mapsto \mathcal{K}(p) := \nabla_{\He} \|p\|^{-2}, \qquad p \in \He \setminus \{0\}, \end{displaymath}
is not antisymmetric. This kernel is the horizontal gradient of the fundamental solution of the sub-Laplace equation $\bigtriangleup_{\He} u = 0$: the solutions of this equation are known as the \emph{harmonic functions in $\He$}, and they have been studied quite extensively, see \cite{BLU} and the references therein. The boundedness of the singular integral $\mathcal{R}$ with kernel $\mathcal{K}$ has consequences akin to the boundedness of the $(n - 1)$-dimensional Riesz transform in $\R^{n}$. For example, $3$-regular subsets of $\He$ on which $\mathcal{R}$ is $L^{2}$-bounded are non-removable for Lipschitz-harmonic functions in $\He$: this result is \cite[Theorem 5.1]{CFO2}, but the singular integral $\mathcal{R}$ was first applied to the removability problem by Chousionis and Mattila in \cite{CM}. Also, if one eventually hopes apply the theory of singular integrals in $\He$ to boundary value problem related to the sub-Laplace equation (following Fabes, Jodeit and Rivi\`ere \cite{MR501367} and Verchota \cite{MR769382}), or understand the behaviour of the associated (sub-)harmonic measure, then the theory needs to cover the operator $\mathcal{R}$.

So, if the kernel $\mathcal{K}$ is not antisymmetric, what is it then? It \emph{horizontally antisymmetric}, as discussed above \cite[Definition 2.5]{CFO2}:
\begin{definition}[Horizontal antisymmetry]\label{HAS} A function $\psi \colon \He \setminus \{0\} \to \R$ is called \emph{horizontally antisymmetric}, if $\psi(\bar{p}) = -\psi(p)$ for all $p \in \He \setminus \{0\}$, where $\overline{(x,y,t)} = (-x,-y,t)$. \end{definition}
This condition neither implies, or is implied, by the direct analogue of antisymmetry \eqref{antisymmetry}, but it seems to be well adapted to the geometry of $\He$. The main result, Theorem 2.10, in \cite{CFO2} shows that $3$-dimensional singular integrals in $\He$ with horizontally antisymmetric Calder\'on-Zygmund kernels are bounded on a family of intrinsic $C^{1,\alpha}$-graphs. For the precise definition of "$3$-dimensional Calder\'on-Zygmund kernel", see \cite[Section 2.2]{CFO2}. In the present paper, I also consider horizontally antisymmetric kernels, but only smooth ones (as this will make the results stronger):
\begin{definition}[Admissible kernels]\label{admissibleKernels0} A smooth function $K \colon \He \setminus \{0\} \to \R$ is called an \emph{admissible kernel}, if the following requirements are met:
\begin{itemize} 
\item $K$ is horizontally antisymmetric,
\item $K$ satisfies
\begin{displaymath} |\nabla_{\He}^{j}K(p)| \leq C(j)\|p\|^{-3 - j}, \qquad j \in \{0,1,2,\ldots\}. \end{displaymath}
\end{itemize}
\end{definition}
It is easy to check (or see the proof of \cite[Proposition 3.11(iii)]{CM}) that the admissible kernels, as above, are $3$-dimensional Calder\'on-Zygmund kernels in the sense of \cite{CFO2}. 

Here is the main result of the note:
\begin{thm}\label{main} Assume that $E \subset \He$ is closed and $3$-regular, and all singular integrals with admissible kernels are bounded on $L^{2}(\calH^{3}|_{E})$. Then $E$ satisfies the weak geometric lemma for vertical $\beta$-numbers. 
\end{thm}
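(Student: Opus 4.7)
\emph{Proof plan.} I follow the strategy signalled in the abstract and factor the argument through the local symmetry condition (LSC):
\begin{displaymath}
\text{$L^{2}$-boundedness of admissible SIOs} \;\Longrightarrow\; \text{(LSC)} \;\Longrightarrow\; \text{WGL for vertical }\beta.
\end{displaymath}
The LSC I would introduce is a Heisenberg analogue of David--Semmes': in a Carleson sense, for most $(p_{0}, r)$ the measure $\calH^{3}|_{E \cap B(p_{0}, r)}$ is approximately invariant under the \emph{horizontal reflection} $\rho_{p_{0}}(q) := p_{0}\cdot\overline{p_{0}^{-1} q}$. This is the right symmetry to pair with horizontal antisymmetry: for admissible $K$, the change of variables $q \mapsto \rho_{p_{0}}(q)$ flips the sign of $K(p_{0}^{-1}q)$. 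The map $\rho_{p_{0}}$ is an involution whose fixed set is the vertical line through $p_{0}$, and every vertical plane through $p_{0}$ is $\rho_{p_{0}}$-invariant.

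\emph{SIO $\Rightarrow$ LSC.} For this step I take the $T(1)$-flavoured approach of David--Semmes. Given $p_{0} \in E$ and $r > 0$, test the hypothesis against a smooth bump $\phi = \phi_{p_{0},r}$ on $B(p_{0}, 2r)$ of mass $\sim r^{3}$, and evaluate $T_{K}\phi(p_{0})$ for admissible $K$. Pairing $q$ with $\rho_{p_{0}}(q)$ uses horizontal antisymmetry to rewrite this pointwise value as a linear functional of the $\rho_{p_{0}}$-\emph{asymmetric} part of $\calH^{3}|_{E}$. A sufficiently rich family of admissible kernels (e.g. a basis of compactly supported horizontally antisymmetric bumps) recovers the full asymmetry from these functionals, and the $L^{2}$ bound, averaged over $p_{0}$ and summed over dyadic scales through a Cotlar/smooth-truncation argument, yields the Carleson packing estimate that defines (LSC).

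\emph{LSC $\Rightarrow$ WGL.} The core is a quantitative rigidity statement: the only closed $3$-regular sets that are $\rho_{p}$-symmetric about each of their own points are (pieces of) vertical planes. Indeed, a vertical plane through $p$ is $\rho_{p}$-invariant because the conjugation $\overline{(\cdot)}$ fixes the vertical direction and reverses the horizontal one, preserving any $2$-plane containing both; conversely, a $3$-regular set containing all such symmetry images of itself must lie in a single vertical $2$-plane. I would establish a stable version: if $E \cap B(p_{0}, r)$ is $\varepsilon$-symmetric about a few carefully chosen points $p_{1}, p_{2}, p_{3}$ spanning a horizontal frame, then $E \cap B(p_{0}, r)$ lies within distance $C\varepsilon r$ of a single vertical plane, giving $\beta_{E}^{\mathrm{vert}}(p_{0}, r) \lesssim \varepsilon$. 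Integrating against the Carleson packing from (LSC) then produces the weak geometric lemma.

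\emph{Main obstacle.} The harder of the two implications is the rigidity/stability step. In $\R^{n}$, point reflections act simply transitively on any affine subspace, and a handful of them pin down the subspace by straightforward linear algebra. In $\He$ the reflections $\rho_{p}$ fix \emph{vertical lines} rather than points, and their compositions are twisted by the commutator $[X, Y] = T$, so one must argue intrinsically: the reflection centers must be chosen inside $E$ (not freely in ambient space), their compositions must be tracked through the non-abelian group law until enough tangent directions are covered, and the $\varepsilon$-error must be propagated uniformly in $r$. The SIO $\Rightarrow$ LSC step, by contrast, is a relatively standard adaptation of the Euclidean template once an admissible kernel family separating the horizontal asymmetry is selected and the smooth-truncation issues implicit in Definition \ref{admissibleKernels0} are handled.
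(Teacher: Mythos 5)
Your architecture is exactly the paper's (SIO $\Rightarrow$ LSC $\Rightarrow$ WGL), and you have correctly identified the right reflection $\rho_{p}=\Sigma_{p}$ and its interaction with horizontal antisymmetry. But there are two genuine gaps. The smaller one: the David--Semmes form of the LSC (approximate invariance under reflection about points of $E$) cannot be imported verbatim, because $q_{1}\mapsto \Sigma_{q_{1}}(q_{2})$ is \emph{not} Lipschitz in $\He$ (Remark \ref{DScomparison}); without the modified Definition \ref{symmetry}, which allows the reflection centre to move to a nearby $q_{1}'\in E$, non-symmetry of a ball does not self-improve under small perturbations, and both your SIO $\Rightarrow$ LSC step and your stability step break down. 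Your "basis of antisymmetric bumps" sketch is otherwise the right shape (the paper routes it through condition (C2) with bump functions adapted to non-symmetric pairs), but the bumps must be chosen compatibly with this relaxed definition.

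The serious gap is in the rigidity/stability step, and you have misidentified the mechanism. Symmetry about "a few carefully chosen points $p_{1},p_{2},p_{3}$ spanning a horizontal frame" does \emph{not} pin $E$ to within $C\varepsilon r$ of a vertical plane: finitely many reflections only generate finitely many new points, and a $3$-regular set can accommodate those while staying far from every vertical plane. The actual argument (Lemmas \ref{symmetricR2}--\ref{symmetriLoops} and Proposition \ref{discreteStructure}) iterates the reflections indefinitely: if $\pi(E)$ contains two independent directions $a,b$, one builds checkers loops in $\pi(E)$ whose lifts back to $E$ climb by $4\det(a,b)$ in the $t$-coordinate per loop, so $E$ contains a $4$-parameter discrete set of $\rho$-separated points; counting $\sim M^{4}$ such points in $B(0,M)$ contradicts $3$-regularity only for $M\gg 1$ relative to the spacing. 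So the rigidity is global-in-scale and cannot be localised to one ball with three reflections; moreover even exact symmetric $3$-regular sets need only be \emph{contained in} a vertical plane (unions of horizontal lines suffice), so no "$\varepsilon$-symmetric $\Rightarrow$ $\beta\lesssim\varepsilon$" estimate of the form you propose can hold with a fixed number of reflections. The paper sidesteps the effective quantification entirely and obtains the stable version (Lemma \ref{betasAndSymmetry}: $\beta_{E}(p,r)\geq\epsilon$ forces a non-$\tau^{2}$-symmetric ball $B(p,r/\tau)$ at a \emph{coarser} scale) by a compactness/blow-up argument against Theorem \ref{sym3reg}, which is then enough for the Carleson estimate \eqref{WGL}. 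You would need either to carry out this compactness argument or to make the iterated-loop counting quantitative; as written, your stability claim is the missing (and false-as-stated) step.
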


Recall that an $\calH^{s}$-measurable set $E \subset \He$ is called $s$-regular, if there exists a constant $A \geq 1$ such that
\begin{displaymath} \frac{r^{s}}{A} \leq \calH^{s}(E \cap B(p,r)) \leq Ar^{s}, \qquad p \in E, \: 0 < r \leq \diam(E). \end{displaymath}

\subsection{The local symmetry condition in $\He$} The proof of Theorem \ref{main}, along with further definitions concerning singular integrals, can be found in Section \ref{SIOSection}. Satisfying the \emph{weak geometric lemma for vertical $\beta$-numbers} means that $E$ admits fairly good approximations by vertical planes at most scales and locations; the condition (without the word "vertical") was introduced by David and Semmes in \cite{DS1}, and the Heisenberg analogue was first studied in \cite{CFO}. For a precise definition, see the statement of Proposition \ref{LSImpliesWGL}.

Theorem \ref{main} is the counterpart of a result in \cite{DS1}, and follows the same chain of implications: in the terminology of \cite{DS1},
\begin{displaymath} \textup{(C1)} \quad \Longrightarrow \quad \textup{(C2)} \quad \Longrightarrow \quad \textup{(LSC)} \quad \Longrightarrow \quad \textup{(WGL)}. \end{displaymath}
Here (C1) is the main hypothesis of Theorem \ref{main}, while (C2)-(LSC) are two intermediate conditions, and (WGL) is the weak geometric lemma (for vertical $\beta$-numbers in the present context). 

The letters (LSC) stand for \emph{local symmetry condition}: this property merits a brief discussion here, because finding -- and applying -- the appropriate analogue in $\He$ is the main novelty of the note. Given two points $x,y \in \R^{n}$, the \emph{symmetric point of $y$ relative to $x$} is
\begin{displaymath} S_{x}(y) := 2x - y. \end{displaymath}
This is the point obtained by mirroring $y$ about the centre $x$. A set $E \subset \R^{n}$ will (in this note at least) be called \emph{symmetric}, if
\begin{displaymath} x,y \in E \quad \Longrightarrow \quad S_{x}(y) \in E. \end{displaymath}
More generally, a closed $m$-regular set $E \subset \R^{n}$ satisfies the (LSC), if it is "symmetric up to a small error" at most scales and places (see Definition \ref{LSC}). The argument in \cite[Section 5]{DS1} can be viewed as a quantitative proof of the following claim: a closed symmetric $m$-regular subset of $\R^{n}$ is an $m$-plane. As a corollary (of the quantitative proof), if a closed $m$-regular set satisfies the (LSC), then it is "close" to an $m$-plane at most scales and locations. This is the proof idea of the implication (LSC) $\Longrightarrow$ (WGL) in $\R^{n}$.

What should be the analogue of a symmetric point in $\He$? Imitating the Euclidean definition, one could set $S^{\He}_{p}(q) = p \cdot q^{-1} \cdot p$ for $p,q \in \He$. Then, one could define the (LSC), and one might even be able to prove the implication (LSC) $\Longrightarrow$ (WGL). However, the resulting notion of (local) symmetry would have nothing to do with horizontally antisymmetric kernels. 

It seems that a more appropriate definition of \emph{symmetric point} is the following:
\begin{equation}\label{symmetricPoint} \Sigma_{p}(q) := p \cdot \overline{p^{-1} \cdot q}, \qquad p,q \in \He, \end{equation}
where $\overline{(x,y,t)} = (-x,-y,t)$, as in Definition \ref{HAS}. It turns out that this notion of symmetric points can be characterised in various different ways, as will be discussed in Section \ref{characterisations}. Also, Theorem \ref{sym3reg} below shows that closed $3$-regular symmetric sets in $\He$ are subsets of vertical planes (they can be strict subsets of planes in $\He$, as opposed to $\R^{n}$). Moreover, Theorem \ref{C1ImpliesWGL} shows that the (LSC) derived from \eqref{symmetricPoint} is implied by the $L^{2}$-boundedness of singular integrals with admissible kernels, as in Definition \ref{admissibleKernels0}. These observations combined (essentially) prove Theorem \ref{main}.

\subsection{Basic notation and other conventions} In the first Heisenberg group $\He = (\R^{3},\cdot)$, I will use the group law
\begin{displaymath} (x,y,t) \cdot (x',y',t') = (x + x',y + y',t + t' + \tfrac{1}{2}(xy' - yx')), \end{displaymath} 
and the (Kor\'anyi) metric $d(p,q) = \|q^{-1} \cdot p\|$ induced by the norm-like quantity
\begin{displaymath} \|p\| := ((x^{2} + y^{2})^{2} + 16t^{2})^{1/4}, \qquad p = (x,y,t) \in \He. \end{displaymath}
For $r > 0$, I write $\delta_{r} \colon \He \to \He$ for the dilatation
\begin{displaymath} \delta_{r}(p) = (rx,ry,r^{2}t), \qquad p = (x,y,t) \in \He. \end{displaymath}
Open balls in the metric $d$ will be denoted by $B(p,r)$, with $p \in \He$ and $r > 0$. The $s$-dimensional Hausdorff measure on $\He$, defined via the metric $d$, is denoted by $\calH^{s}$. The notation $A \lesssim_{P} B$ means that $A \leq CB$, where $C \geq 1$ is a constant depending only on the parameter $P$. The two-sided inequality $A \lesssim_{P} B \lesssim_{Q} \lesssim A$ is abbreviated to $A \sim_{P,Q} B$.

\section{The structure of symmetric sets in $\He$}\label{section2}

\subsection{Symmetric points in $\He$}\label{characterisations} Here is the Euclidean definition once more:

\begin{definition}[Symmetric points in $\R^{2}$] Given two points $x,y \in \R^{2}$, we denote by $S_{x}(y) = 2x - y$ the \emph{symmetric point of $y$ relative to $x$}. A set $E \subset \R^{2}$ is \emph{symmetric}, if $S_{x}(y) \in E$ for all $x,y \in E$. \end{definition}

I denote by $\pi \colon \He \to \R^{2}$ the projection $\pi(x,y,t) = (x,y)$, which is a group homomorphism $(\He,\cdot) \to (\R^{2},+)$. Points in $\R^{2}$ are typically denoted by $z$, and points in $\He$ are typically denoted by $p,q$. I will often identify $\R^{2}$ with the horizontal plane $H := \{(x,y,0) : x,y \in \R\} \subset \He$. In particular, if $z \in \R^{2}$ and $p \in \He$, the notation $p \cdot z$ stands for $p \cdot (z,0)$. Also, without special mention, I often write points $p \in \He$ in the form $p = (z,t)$, where $z = \pi(p)$. 

\begin{definition}[Lifts] Given two points $p,q \in \He$, the \emph{$p$-lift} of $q$ is the point
\begin{equation}\label{liftDef} q|_{p} = p\cdot (\pi(q) - \pi(p)). \end{equation} 
Then $q|_{p}$ is the unique point in the plane $p \cdot H$ which $\pi$-projects to $\pi(q)$: in other words, $q|_{p}$ is characterised by the properties
\begin{equation}\label{lift} \pi(q|_{p}) = \pi(q) \quad \text{and} \quad p^{-1} \cdot q|_{p} \in H. \end{equation}
Given a sequence of points $(q_{1},\ldots,q_{n}) \in \He^{n}$, and a (base) point $p \in \He$, the $p$-lift of the sequence $\sigma = (q_{1},\ldots,q_{n})$ is the sequence $\sigma|_{p} = (p_{1},\ldots,p_{n}) \in \He^{n}$ determined by $p_{1} = q_{1}|_{p}$ and
\begin{displaymath} p_{j + 1} = q_{j + 1}|_{p_{j}} \qquad 1 \leq j \leq n - 1. \end{displaymath} 
\end{definition}
In fact, I will only need to lift points and sequences lying on $\R^{2}$. 

\begin{definition}[Symmetric points in $\He$] Given two points $p,q \in \He$, the \emph{symmetric point of $q$ relative to $p$} is the point
\begin{displaymath} \Sigma_{p}(q) := S_{\pi(p)}(\pi(q))|_{q} = q \cdot [S_{\pi(p)}(\pi(q)) - \pi(q)]. \end{displaymath}
A set $E \subset \He$ is \emph{symmetric}, if $\Sigma_{p}(q) \in E$ for all $p,q \in E$.
\end{definition}

The symmetric point of $q$ relative to $p$ is obtained by projecting both points to the plane $H \cong \R^{2}$, then mirrorig $\pi(q)$ relative to the centre $\pi(p)$, and finally lifting the result back to the horizontal plane $q \cdot H$. For computational purposes, I record a simple formula for $\Sigma_{p}(q)$, which also appeared in \eqref{symmetricPoint}. For $p = (z,t) \in \He$, write $\bar{p} := (-z,t)$. For later, I note that $p \mapsto \bar{p}$ is clearly a group isomorphism and an isometry. 

\begin{lemma}\label{lemma1} For $p,q \in \He$,
\begin{displaymath} \Sigma_{p}(q) = p \cdot \overline{p^{-1} \cdot q}. \end{displaymath}
\end{lemma}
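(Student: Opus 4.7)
The plan is to verify that the right hand side $p \cdot \overline{p^{-1} \cdot q}$ satisfies the two defining properties \eqref{lift} of the $q$-lift of the point $S_{\pi(p)}(\pi(q)) \in \R^{2}$. Once this is established, uniqueness of the lift forces $p \cdot \overline{p^{-1} \cdot q} = S_{\pi(p)}(\pi(q))|_{q}$, which equals $\Sigma_{p}(q)$ by definition.

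First I would verify the projection property. Recall that $\pi \colon (\He,\cdot) \to (\R^{2},+)$ is a group homomorphism, and directly from the definition $\overline{(z,t)} = (-z,t)$ one has $\pi(\bar{r}) = -\pi(r)$ for every $r \in \He$. Setting $r := p^{-1} \cdot q$, so that $\pi(r) = \pi(q) - \pi(p)$, one immediately computes
\begin{displaymath}
\pi(p \cdot \bar{r}) = \pi(p) + \pi(\bar{r}) = \pi(p) - \pi(r) = 2\pi(p) - \pi(q) = S_{\pi(p)}(\pi(q)),
\end{displaymath}
which is the first condition in \eqref{lift}.

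Next I would verify the horizontality condition $q^{-1} \cdot (p \cdot \bar{r}) \in H$. Since $q^{-1} \cdot p = r^{-1}$, the expression $q^{-1} \cdot p \cdot \bar{r}$ simplifies to $r^{-1} \cdot \bar{r}$, so the task reduces to the following short coordinate computation: for any $r = (x,y,t) \in \He$, one has $r^{-1} = (-x,-y,-t)$ and $\bar{r} = (-x,-y,t)$, and a direct application of the group law gives $r^{-1} \cdot \bar{r} = (-2x,-2y,0) \in H$. This is the only calculation where one actually needs to open up the group law, and it is two lines.

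Combining the two properties, $p \cdot \overline{p^{-1} \cdot q}$ is the unique element of the horizontal plane $q \cdot H$ whose $\pi$-image is $S_{\pi(p)}(\pi(q))$, hence coincides with $\Sigma_{p}(q)$. There is no real obstacle here; the only risk is getting lost in coordinates, which is why I prefer to organize the argument around the two abstract properties \eqref{lift} rather than expanding both sides in $(x,y,t)$-coordinates and matching components.
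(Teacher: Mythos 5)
Your proposal is correct and follows essentially the same route as the paper: both verify that $p \cdot \overline{p^{-1}\cdot q}$ satisfies the two characterizing properties \eqref{lift} of the lift, using the homomorphism property of $\pi$ together with $\pi(\bar r) = -\pi(r)$ for the projection condition, and the short coordinate computation $r^{-1}\cdot\bar r = (-2x,-2y,0) \in H$ for the horizontality condition. No issues.
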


\begin{proof} Write $w := p \cdot \overline{p^{-1} \cdot q}$. Then, by \eqref{lift}, one has $w = S_{\pi(p)}(\pi(q))|_{q} =: \Sigma_{p}(q)$, if and only if
\begin{itemize}
\item[(i)] $\pi(w) = S_{\pi(p)}(\pi(q))$, and
\item[(ii)] $q^{-1} \cdot w \in H$.
\end{itemize}
To verify (i), note that $\pi(\overline{w}) = -\pi(w)$ for all $w \in \He$. Then compute as follows:
\begin{displaymath} \pi(w) = \pi(p) - \pi(p^{-1} \cdot q) = \pi(p) + (\pi(p) - \pi(q)) =: S_{\pi(p)}(\pi(q)).  \end{displaymath}
To see (ii), note that $q^{-1} \cdot w = (q^{-1} \cdot p) \cdot (\overline{p^{-1} \cdot q}) = v^{-1} \cdot \bar{v}$, where $v := (z,t) := p^{-1} \cdot q$. Then, 
\begin{displaymath} v^{-1} \cdot \bar{v} = (-z,-t) \cdot (-z,t) = (-2z,0) \in H, \end{displaymath}
as claimed. The proof is complete. \end{proof}
\begin{ex} From Lemma \ref{lemma1}, one immediately gets
\begin{displaymath} \Sigma_{0}(q) = 0 \cdot \overline{0^{-1} \cdot q} = \bar{q}. \end{displaymath}
This equation is, later on, the link to horizontally antisymmetric kernels. 
\end{ex}

If $p = (a,b,c)$ and $q = (x,y,t)$, one can further compute that
\begin{equation}\label{form10} \Sigma_{p}(q) = p \cdot \overline{p^{-1} \cdot q} = (2a - x, 2b - y,t - ax + by). \end{equation}

\begin{remark} Here is one more description of $\Sigma_{p}(w)$ (which I will not explicitly need). If $p^{-1} \cdot q$ lies on the $t$-axis, then $\Sigma_{p}(q)$ is simply $q$. Otherwise, there is a unique vertical subgroup $\W$ satisfying $p \cdot \W = q \cdot \W$, and further $q$ lies on a unique horizontal line $L \subset p \cdot \W$. The line $L$ contains a unique point $w \neq q$ with 
\begin{displaymath} d(p,w) = d(p,q), \end{displaymath}
and this point is $w = \Sigma_{p}(q)$. This is easy to verify: it is immediate from the definition that $\Sigma_{p}(q)$ lies on the same vertical plane $p \cdot \W$ as $p,q$ (since $S_{\pi(p)}(\pi(q))$ does), and also on some common horizontal line with $q$ -- namely the unique such line contained in $p \cdot \W$. Finally, from Lemma \ref{lemma1} one infers that
\begin{displaymath} d(p,\Sigma_{p}(q)) = \|p^{-1} \cdot p \cdot \overline{p^{-1} \cdot q}\| = \|\overline{p^{-1} \cdot q}\| = \|p^{-1} \cdot q\| = d(p,q). \end{displaymath}
\end{remark}

\subsection{Properties of symmetric sets in $\He$} The next aim is to describe the structure of symmetric sets in $\He$: it turns out that they are either subsets of vertical planes, or then something substantially larger. First, a nearly trivial lemma:
\begin{lemma} Assume that $E \subset \He$ is symmetric. Then $\pi(E) \subset \R^{2}$ is symmetric. \end{lemma}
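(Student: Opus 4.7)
The claim is essentially a direct consequence of the definitions, so the proof should be a short unpacking rather than something requiring a real idea.

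My plan is as follows. Take arbitrary $a, b \in \pi(E)$, and choose preimages $p, q \in E$ with $\pi(p) = a$ and $\pi(q) = b$. By the symmetry hypothesis on $E$, the point $\Sigma_{p}(q)$ lies in $E$, so $\pi(\Sigma_{p}(q)) \in \pi(E)$. It then suffices to observe that $\pi(\Sigma_{p}(q)) = S_{\pi(p)}(\pi(q)) = S_{a}(b)$, because this gives $S_{a}(b) \in \pi(E)$, which is exactly the definition of $\pi(E)$ being symmetric in $\R^{2}$.

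For the equality $\pi(\Sigma_{p}(q)) = S_{\pi(p)}(\pi(q))$, I would just invoke the defining property \eqref{lift} of the lift: recall $\Sigma_{p}(q) = S_{\pi(p)}(\pi(q))|_{q}$, and by \eqref{lift} any point of the form $z|_{q}$ satisfies $\pi(z|_{q}) = z$. Alternatively, one can read it off from Lemma \ref{lemma1} together with the fact that $\pi(\bar{w}) = -\pi(w)$ and $\pi$ is a homomorphism $(\He, \cdot) \to (\R^{2}, +)$, which gives
\[
\pi(\Sigma_{p}(q)) = \pi(p) + \pi(\overline{p^{-1} \cdot q}) = \pi(p) - \pi(p^{-1} \cdot q) = 2\pi(p) - \pi(q) = S_{\pi(p)}(\pi(q)).
\]

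There is no genuine obstacle here; the only thing to be careful about is to phrase the argument using preimages $p,q$ of given points $a,b \in \pi(E)$, rather than trying to push $\Sigma$ down to $\R^{2}$ in some other way. In fact the argument shows the slightly sharper statement that $\pi \circ \Sigma_{p} = S_{\pi(p)} \circ \pi$ as maps $\He \to \R^{2}$, i.e.\ the symmetric-point operation in $\He$ intertwines with its Euclidean counterpart under $\pi$, and the lemma is then just the image of this intertwining relation under the set-theoretic operation $E \mapsto \pi(E)$.
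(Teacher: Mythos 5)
Your proposal is correct and is exactly the paper's argument: pick preimages $p,q \in E$ of given points of $\pi(E)$, use $\Sigma_p(q)\in E$, and note that $\pi(\Sigma_p(q))=S_{\pi(p)}(\pi(q))$, which follows directly from the characterisation \eqref{lift} of the lift. The extra justification you give via Lemma \ref{lemma1} is fine but not needed.
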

\begin{proof} Let $\pi(p),\pi(q) \in \pi(E)$ with $p,q \in E$. Then $\Sigma_{p}(q) \in E$, hence
\begin{displaymath} S_{\pi(p)}(\pi(q)) = \pi(\Sigma_{p}(q)) \in \pi(E). \end{displaymath}
This means, by definition, that $\pi(E)$ is symmetric in $\R^{2}$.
\end{proof}

Motivated by this observation, one is first tempted to study symmetric sets in $\R^{2}$:
\begin{lemma}\label{symmetricR2} Assume that $A \subset \R^{2}$ is symmetric, and $0,a,b \in A$. Then
\begin{displaymath} (\Z a + \Z b) \setminus [(2\Z + 1)a + (2\Z + 1)b] \subset A. \end{displaymath}
\end{lemma}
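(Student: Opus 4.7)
The plan is to observe that, for any symmetric set $A \subset \R^{2}$, the composition of two reflections is a translation:
\begin{displaymath}
S_{x} \circ S_{y}(z) = 2x - (2y - z) = z + 2(x - y), \qquad x,y,z \in \R^{2}.
\end{displaymath}
Consequently, whenever $x, y \in A$, the set $A$ is invariant under translation by the vector $2(x - y)$. Letting $\Lambda(A)$ denote the additive subgroup of $(\R^{2}, +)$ generated by $\{2(x - y) : x, y \in A\}$, this says $A + \Lambda(A) \subset A$.

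Applying this to the hypothesis $\{0, a, b\} \subset A$ gives $2a, 2b \in \Lambda(A)$, hence $2\Z a + 2\Z b \subset \Lambda(A)$. Translating each of the three starting points by this sublattice yields three disjoint cosets inside $A$,
\begin{displaymath}
(2\Z a + 2\Z b) \cup ((2\Z + 1)a + 2\Z b) \cup (2\Z a + (2\Z + 1)b) \subset A,
\end{displaymath}
and the union on the left is precisely $(\Z a + \Z b) \setminus [(2\Z + 1)a + (2\Z + 1)b]$, which is the desired conclusion.

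I anticipate no serious obstacle: the whole argument reduces to the one-line identity for $S_x \circ S_y$ plus cosmetic coset bookkeeping, with no induction required once the translation action is identified. As a sanity check, the excluded coset genuinely cannot be reached in general, since $y \mapsto S_{x}(y) = 2x - y$ preserves the parity class of $y$ modulo $2\Z a + 2\Z b$; starting from the three parity classes represented by $0, a, b$, one never enters the fourth class $(a + b) + 2\Z a + 2\Z b$. Thus the lemma is sharp.
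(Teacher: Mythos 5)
Your proof is correct, and it takes a genuinely different route from the paper's. The paper argues by induction on $|m|+|n|$, exhibiting each point of $\Z^{2} \setminus (2\Z+1)^{2}$ explicitly as a symmetric point $S_{(m_{1},n_{1})}(m_{2},n_{2})$ of two previously constructed points closer to the origin; this requires a case analysis on the parities and signs of $m,n$ (partly delegated to a figure and to the reader), and general $a,b$ are then handled via the homomorphism $(m,n)\mapsto ma+nb$, which commutes with $S$. You instead exploit the classical fact that $S_{x}\circ S_{y}$ is the translation by $2(x-y)$, so that a symmetric set is invariant under the group $\Lambda(A)$ generated by these vectors (the generating set is symmetric under negation, so closure under the generators gives closure under the whole group); with $0,a,b\in A$ this yields $2\Z a+2\Z b\subset\Lambda(A)$, hence the three cosets $c+2\Z a+2\Z b$, $c\in\{0,a,b\}$, all lie in $A$. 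This eliminates the induction and the case chase entirely. One pedantic caveat: your claim that the union of the three cosets \emph{equals} the target set is literally true only when $a,b$ are linearly independent; in the degenerate case the union can be strictly larger, but since the target set is always contained in that union, the inclusion you need is unaffected. Your closing sharpness remark (that $S$ preserves the class modulo $2\Z a+2\Z b$, so the fourth coset is never reached) is exactly the paper's observation that points of $(2\Z+1)a+(2\Z+1)b$ ``do not appear''. Nothing downstream is lost either: the later lemmas on checkers sequences use only the statement proved here, and your translation $z\mapsto z+2(x-y)$ is realized by the two-step move $z\mapsto S_{y}(z)\mapsto S_{x}(S_{y}(z))$, so the explicit symmetric-point constructions are implicit in your argument.
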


\begin{proof} We will show this in the case $a = e_{1} = (1,0)$ and $b = e_{2} = (0,1)$. By induction, it suffices to show that every point
\begin{displaymath} (m,n) \in [\Z \times \Z \setminus (2\Z + 1) \times (2\Z + 1)] \setminus \{0,e_{1},e_{2}\} \end{displaymath}
can be expressed as $(m,n) = S_{(m_{1},n_{1})}(m_{2},n_{2})$, where
\begin{displaymath} (m_{1},n_{1}), (m_{2},n_{2}) \in  \Z \times \Z \setminus (2\Z + 1) \times (2\Z + 1), \end{displaymath}
and 
\begin{equation}\label{form14} |m_{1}| + |n_{1}| + |m_{2}| + |n_{2}| < 2(|m| + |n|). \end{equation}
The proof is most clearly conveyed by a picture, see Figure \ref{fig1}: use points close to the origin to construct further points by "jumping over" the previously constructed points. It is slightly curious that points $(m,n) \in (2\Z + 1) \times (2\Z + 1)$ do not appear.
\begin{figure}[h!]
\begin{center}
\includegraphics[scale = 0.9]{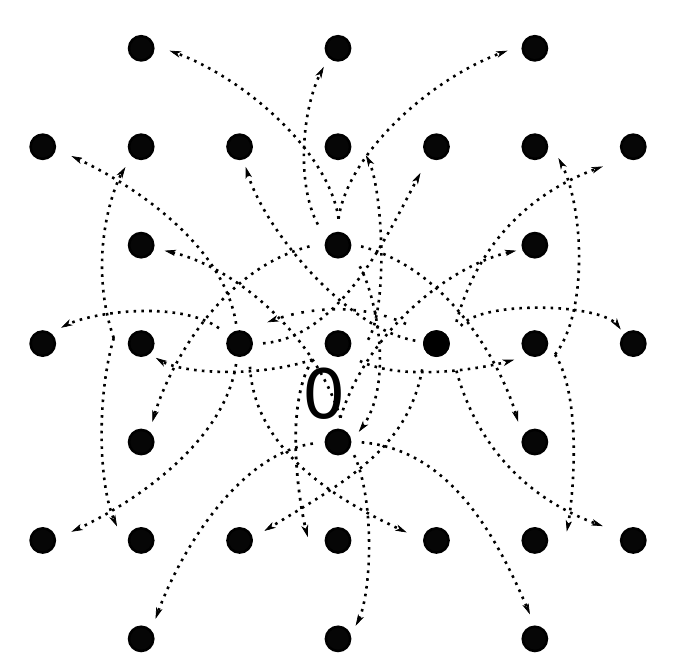}
\caption{Constructing the set $\Z \times \Z \setminus (2\Z + 1) \times (2\Z + 1)$ from the generators $\{(0,0),(1,0),(0,1)\}$. An arrow ending at a point shows how it can be found as a symmetric point of two previously constructed points.}\label{fig1}
\end{center}
\end{figure}
The rigorous proof requires plenty of case chase depending on whether $m$ or $n$ is even, and whether $m \leq 0$, $m > 0$, $n \leq 0$, $n > 0$. I only consider the case when $m,n \geq 0$ and $m$ is even (the case where $n$ is even is symmetric). Also, the case where either $m = 0$ or $n = 0$ is easy (for example $(m,0) = S_{(m - 1,0)}(m - 2,0)$), so I assume that $m,n \geq 1$. If also $n$ is even, I note that 
\begin{displaymath} (m - 2,n),(m - 1,n) \in \Z \times \Z \setminus (2\Z + 1) \times (2\Z + 1). \end{displaymath}
Clearly \eqref{form14} is also satisfied, and
\begin{displaymath} S_{(m - 1,n)}(m - 2,n) = (2(m - 1) - (m - 2),2n - n) = (m,n), \end{displaymath}
as required. This situation corresponds to (roughly) vertical and horizontal arrows in Figure \ref{fig1}. If $n$ is odd, then consider the points 
\begin{displaymath} (m - 2,n - 2),(m - 1,n - 1) \in \Z \times \Z \setminus (2\Z + 1) \times (2\Z + 1), \end{displaymath}
and note that \eqref{form14} is again satisfied. Further,
\begin{displaymath} S_{(m - 1,n - 1)}(m - 2,n - 2) = (2(m - 1) - (m - 2),2(n - 1) - (n - 2)) = (m,n), \end{displaymath}
as desired. This corresponds to the diagonal arrows in Figure \ref{fig1}. We leave the other cases to the reader, as it is quite easy to see from Figure \ref{fig1} what to do. The case of general $a,b$ reduces to the one treated above, as the map $\iota \colon (m,n) \mapsto am + bn$ commutes with $S$:
\begin{displaymath} S_{\iota(m_{1},n_{1})}(\iota(m_{2},n_{2})) = \iota (S_{(m_{1},n_{1})}(m_{2},n_{2})). \end{displaymath}
In other words, if $am + bn \in (\Z a + \Z b) \setminus [(2\Z + 1)a + (2\Z + 1)b]$, then then one can first use the argument above to find $(m_{1},n_{1}),(m_{2},n_{2})$ with $S_{(m_{1},n_{2})}(m_{2},n_{2}) = (m,n)$, and then
\begin{displaymath} S_{am_{1} + bn_{1}}(am_{2} + bn_{2}) = \iota(S_{(m_{1},n_{2})}(m_{2},n_{2})) = \iota(m,n) = am + bn. \end{displaymath}
This completes the proof. \end{proof}

\begin{remark} The assumption that $0 \in A$ is not essential: if $a,b,c \in A$, then
\begin{displaymath} a + (\Z(b - a) + \Z(c - a)) \setminus [(2\Z + 1)(b - a) + (2\Z + 1)(c - a)] \subset A, \end{displaymath}
which follows from the previous lemma applied to the (symmetric) set $A - a$.
\end{remark}

Next, I aim to show that if $E \subset \He$ is symmetric, then the (symmetric) set $\pi(E)$ contains plenty of sequences, whose lifts lie inside $E$. 
\begin{definition} A \emph{checkers sequence} in a planar set $A \subset \He$ is a sequence $(x_{1},\ldots,x_{n}) \subset A^{n}$ with the following property: for every $1 \leq j \leq n - 1$, there exists a point $y_{j} \in A$ such that $x_{j + 1} = S_{y_{j}}(x_{j})$.
\end{definition}

Thus, checkers sequences are those sequences in $A$ which can be obtained by "jumping over other points in $A$". The point of the definition is that if $E$ is symmetric, then checkers sequences in $\pi(E)$ can be lifted without leaving $E$:
\begin{lemma}\label{liftingInE} Let $E \subset \He$ be symmetric, and let $q \in E$. Let $(z_{1},\ldots,z_{n}) \in (\pi(E))^{n}$ be a checkers sequence with $z_{1} = \pi(q)$. Then the $q$-lift of $(z_{1},\ldots,z_{n})$ is contained in $E$.
\end{lemma}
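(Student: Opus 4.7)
The plan is a straightforward induction on $j$. The key observation underlying everything is that the Heisenberg symmetry $\Sigma_{w}(p)$ and the planar symmetry $S_{\pi(w)}(\pi(p))$ are related exactly by the lift operation appearing in the definition of $q$-lift of a sequence: by definition of $\Sigma$, one has $\Sigma_{w}(p) = S_{\pi(w)}(\pi(p))|_{p}$, with the lift based at $p$ (not $w$). This makes it natural to try to identify $p_{j+1}$ as $\Sigma_{w_j}(p_j)$ for some appropriate $w_j \in E$.

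For the base case, $p_1 = z_1|_q = q \cdot (z_1 - \pi(q)) = q$, since $z_1 = \pi(q)$; in particular $p_1 \in E$ and $\pi(p_1) = z_1$. For the inductive step, assume $p_j \in E$ with $\pi(p_j) = z_j$. Since $(z_1,\ldots,z_n)$ is a checkers sequence in $\pi(E)$, there exists $y_j \in \pi(E)$ with $z_{j+1} = S_{y_j}(z_j)$. Pick any $w_j \in E$ with $\pi(w_j) = y_j$; this is possible because $y_j \in \pi(E)$. Since $E$ is symmetric and $w_j,p_j \in E$, one has $\Sigma_{w_j}(p_j) \in E$. Unpacking the definitions,
\[
\Sigma_{w_j}(p_j) \;=\; S_{\pi(w_j)}(\pi(p_j))|_{p_j} \;=\; S_{y_j}(z_j)|_{p_j} \;=\; z_{j+1}|_{p_j} \;=\; p_{j+1},
\]
so $p_{j+1} \in E$, and moreover $\pi(p_{j+1}) = z_{j+1}$, closing the induction.

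The one point that requires any care -- although it is really just bookkeeping -- is ensuring that the base of the lift in $\Sigma_{w_j}(p_j)$ is $p_j$ and not $w_j$, so that it matches the base in the sequence-lift definition $p_{j+1} = z_{j+1}|_{p_j}$. I do not anticipate any substantive obstacle: once the definitions of $\Sigma$, the lift of a point, and the lift of a sequence are lined up, the lemma is essentially a tautology.
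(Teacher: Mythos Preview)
Your proof is correct and follows essentially the same approach as the paper's: induction on $j$, identifying $p_{j+1}$ with $\Sigma_{w_j}(p_j)$ via the defining identity $\Sigma_{w}(p) = S_{\pi(w)}(\pi(p))|_{p}$ and invoking symmetry of $E$. Your explicit tracking of the invariant $\pi(p_j)=z_j$ is a slight notational clarification, but otherwise the arguments are identical.
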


\begin{proof} Let $q_{1},\ldots,q_{n} \subset \He$ be the $q$-lift of $z_{1},\ldots,z_{n}$, so that in particular $\pi(q_{j}) = z_{j}$ for all $1 \leq j \leq n$. Then
\begin{displaymath} q_{1} = z_{1}|_{q} = \pi(q)_{q} = q \in E. \end{displaymath}
Next, assume inductively that $q_{j} \in E$ for some $j \geq 1$: the aim is to prove that $q_{j + 1} \in E$. By definition of the $q$-lift of a sequence, one has
\begin{equation}\label{form11} q_{j + 1} = z_{j + 1}|_{q_{j}}, \end{equation}
where $q_{j} \in E$. Since $(z_{1},\ldots,z_{n})$ is a checkers sequence, there exists $\pi(p_{j}) \in \pi(E)$ such that $z_{j + 1} = S_{\pi(p_{j})}(z_{j}) = S_{\pi(p_{j})}(\pi(q_{j}))$. Combining this with \eqref{form11} and the symmetry assumption (on $E$) yields
\begin{displaymath} q_{j + 1} = S_{\pi(p_{j})}(\pi(q_{j}))|_{q_{j}} =: \Sigma_{p_{j}}(q_{j}) \in E. \end{displaymath} 
This completes the induction. \end{proof}

So, to construct more points from existing points in a symmetric set $E \subset \He$, it might suffice to find checkers sequences in $\pi(E)$, and lift them. This leads to the question: which sequences in $\pi(E)$ are checkers sequences?
\begin{lemma} Assume that $A \subset \R^{2}$ is symmetric, and $0,a,b \in A$. A sequence $\sigma = (z_{1},\ldots,z_{n}) \in (2\Z a + 2\Z b)^{n} \subset A^{n}$ is called \emph{connected}, if 
\begin{displaymath} z_{j + 1} \in \{z_{j} + 2a, z_{j} - 2a,z_{j} + 2b,z_{j} - 2b\}, \qquad 1 \leq j \leq n - 1. \end{displaymath}
Connected sequences are checkers sequences in $A$.
\end{lemma}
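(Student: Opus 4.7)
The plan is to reduce the statement to a direct application of Lemma \ref{symmetricR2}. Unwinding the definition of a checkers sequence, it suffices to produce, for each $1 \leq j \leq n-1$, a point $y_{j} \in A$ satisfying $z_{j+1} = S_{y_{j}}(z_{j}) = 2y_{j} - z_{j}$, equivalently $y_{j} = \tfrac{1}{2}(z_{j} + z_{j+1})$. So the entire problem collapses to checking that this midpoint lies in $A$ in each of the four connectedness cases.

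First I would write $z_{j} = 2ma + 2nb$ for some $m,n \in \Z$, which is possible since $z_{j} \in 2\Z a + 2\Z b$ by hypothesis. Then I would handle the four cases $z_{j+1} \in \{z_{j} \pm 2a, z_{j} \pm 2b\}$ uniformly: the midpoint $y_{j}$ becomes one of
\begin{displaymath} (2m \pm 1)a + 2nb \quad \text{or} \quad 2ma + (2n \pm 1)b. \end{displaymath}
In every case one coefficient is odd and the other is even, so $y_{j} \in (\Z a + \Z b) \setminus [(2\Z + 1)a + (2\Z + 1)b]$.

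Finally, since $0,a,b \in A$ and $A$ is symmetric, Lemma \ref{symmetricR2} applies and yields $(\Z a + \Z b) \setminus [(2\Z + 1)a + (2\Z + 1)b] \subset A$. Thus $y_{j} \in A$, which is exactly what the checkers sequence property requires.

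There is essentially no obstacle here — the lemma is a direct bookkeeping consequence of Lemma \ref{symmetricR2}. The only minor point to mention is that the connectedness hypothesis confines $z_{j}$ to the sublattice $2\Z a + 2\Z b$ (the ``even'' sublattice), which is precisely what forces the midpoint $y_{j}$ to land in the allowed set $(\Z a + \Z b) \setminus [(2\Z + 1)a + (2\Z + 1)b]$ rather than in the forbidden ``odd-odd'' cosets excluded by Lemma \ref{symmetricR2}.
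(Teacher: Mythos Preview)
Your proof is correct and essentially identical to the paper's own argument. The paper picks one representative case ($z_{j+1} = z_j - 2a$), sets $y_j := z_j - a$, observes via Lemma \ref{symmetricR2} that $y_j \in A$, and checks $S_{y_j}(z_j) = z_{j+1}$, then says ``the other cases are similar''; you do the same thing but phrase it uniformly via the midpoint $y_j = \tfrac{1}{2}(z_j + z_{j+1})$.
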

\begin{proof} Let $\sigma = (z_{1},\ldots,z_{n}) \in (2\Z a + 2\Z b)^{n}$ be a connected sequence: fix $1 \leq j \leq n - 1$, and assume for example that $z_{j + 1} = z_{j} - 2a$. Since $z_{j} \in 2\Z a + 2\Z b$, we see that
\begin{displaymath} y_{j} := z_{j} - a \in (\Z a + \Z b) \setminus [(2\Z + 1)a + (2\Z + 1)b] \subset A \end{displaymath}
by Lemma \ref{symmetricR2}. Further,
\begin{displaymath} z_{j + 1} = z_{j} - 2a = S_{z_{j} - a}(z_{j}) = S_{y_{j}}(z_{j}), \end{displaymath}
which means that $\sigma$ is a checkers sequence in $A$. The other cases are similar.
\end{proof}

The main consequence of the lemma is the existence of closed checkers sequences:

\begin{lemma}\label{symmetriLoops} Let $A \subset \R^{2}$ be symmetric with $0,a,b \in A$. Then, for any $z \in (2\Z a + 2\Z b)$, the sequences
\begin{displaymath} \sigma_{z}^{+} := (z,z + 2a, z + 2a + 2b,z + 2b,z) \quad \text{and} \quad \sigma_{z}^{-} := (z,z - 2a,z - 2a - 2b,z - 2b,z) \end{displaymath}
are checkers sequences (that is, checkers loops) in $A$.
\end{lemma}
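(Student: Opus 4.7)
The plan is to exhibit the required "jumping points" explicitly. Since $S_{y}(x) = 2y - x$, a pair $(x_j, x_{j+1})$ extends to a valid step of a checkers sequence exactly when the midpoint $y_j = (x_j + x_{j+1})/2$ lies in $A$. For $\sigma_z^+$ the four consecutive pairs force the midpoints
\begin{displaymath} y_1 = z + a, \quad y_2 = z + 2a + b, \quad y_3 = z + a + 2b, \quad y_4 = z + b, \end{displaymath}
and for $\sigma_z^-$ the analogous midpoints obtained by flipping the signs of $a$ and $b$. So the task reduces to verifying that each $y_j$, together with the five vertices of the loop themselves, lies in $A$.

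For this I will feed the explicit expressions into Lemma~\ref{symmetricR2}. Writing $z = 2ma + 2nb$ with $m,n \in \Z$, the four midpoints become $(2m+1)a + 2nb$, $(2m+2)a + (2n+1)b$, $(2m+1)a + (2n+2)b$, and $2ma + (2n+1)b$, respectively; in each case at least one of the two integer coefficients is even, so none of the $y_j$ lies in $(2\Z + 1)a + (2\Z + 1)b$. Thus $y_1,\ldots,y_4 \in (\Z a + \Z b) \setminus [(2\Z + 1)a + (2\Z + 1)b] \subset A$ by Lemma~\ref{symmetricR2}. The vertices of $\sigma_z^+$ themselves already lie in $2\Z a + 2\Z b$, which is contained in the same set, hence in $A$. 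The case of $\sigma_z^-$ is identical after replacing $a,b$ by $-a,-b$ throughout (parities are preserved).

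I do not expect any genuine obstacle: all the content has been packaged into Lemma~\ref{symmetricR2}, and what remains is a parity check on the explicit midpoints. The only thing to be slightly careful about is that the loop is genuinely \emph{closed}, but this is automatic from the choice $\sigma_z^{\pm}$: the displacement along the four edges sums to zero, so the last vertex equals the first.
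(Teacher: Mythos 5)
Your proof is correct and is essentially the paper's argument: the paper factors the verification through an intermediate lemma (connected sequences in $2\Z a + 2\Z b$ are checkers sequences), whose proof produces exactly the same midpoints $z_j \pm a$, $z_j \pm b$ and the same appeal to Lemma~\ref{symmetricR2} via the parity check; you have simply inlined that computation for the specific loops $\sigma_z^{\pm}$.
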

\begin{proof} The loops $\sigma_{z}^{+},\sigma_{z}^{-} \in (2\Z a + 2\Z b)^{5}$ are evidently connected, so the claim follows from the previous lemma. \end{proof}

%%%%%%%%%%%%%

\begin{comment}

\begin{lemma}\label{symmetricLoops} Let $A \subset \R^{2}$ be symmetric with $a,b,c \in A$. Then, the sequences
\begin{displaymath} \sigma_{a,b,c}^{+} := \sigma^{+} := (a,a + 2b,a + 2b + 2c,a + 2c,a) \quad \text{and} \quad \sigma_{a}^{-} := (a,a - 2b,a - 2b - 2c,a - 2c,a) \end{displaymath}
are checkers sequences (that is, checkers loops) in $A$.
\end{lemma}

\begin{proof} The symmetric set $A - a$ contains $\{0,b - a, c - a\}$, and the sequence
\begin{displaymath} (0,2(b - a),2(b - a) + 2(c - a),2(c - a),0) \end{displaymath} 
is clearly connected, in the sense of the previous lemma. Hence it is a checkers sequence in $A - a$. Finally, the next computation verifies that if $(z_{1},\ldots,z_{n})$ is a checkers sequence in $A - a$, with $z_{j + 1} = S_{y_{j} - a}(z_{j})$, then $(a + z_{1},\ldots,a + z_{n})$ is a checkers sequence in $A$:
\begin{displaymath} a + z_{j + 1} = a + S_{y_{j} - a}(z_{j}) = a + 2(y_{j} - a) - z_{j} = 2y_{j} - (a + z_{j}) = S_{y_{j}}(a + z_{j}). \end{displaymath}
This completes the proof. \end{proof}

\end{comment}

%%%%%%%%%%%%%

I suppress the dependence of $\sigma_{z}^{+}$ and $\sigma_{z}^{-}$ on $a,b$ from the notation, because the points $a,b$ will be "fixed" in future applications. It is one of the most fundamental features of $\He$ that the $q$-lift of a loop ends up either strictly "above" or "below" $q$. Here are the numbers:
\begin{lemma} Let $z,a = (a_{1},a_{2}),b = (b_{1},b_{2}) \in \R^{2}$, and let $p = (z,t) \in \He$. Then, the $p$-lift of the loop $\sigma_{z}^{+}$ terminates at $(z,t + 4\det(a,b))$, and the $p$-lift of the loop $\sigma^{-}_{z}$ terminates at $(z,t-4\det(a,b))$, where $\det(a,b) := a_{1}b_{2} - a_{2}b_{1}$.
\end{lemma}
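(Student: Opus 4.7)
The plan is to unwind the definition of the $p$-lift and explicitly track the $t$-coordinate using the Heisenberg group law. Writing $p_j = (w_j, s_j)$ with $w_j \in \R^2$ and $s_j \in \R$, the recursion $p_{j+1} = z_{j+1}|_{p_j} = p_j \cdot (z_{j+1} - z_j, 0)$ combined with the product rule
\[
(w, s) \cdot (v, 0) = \bigl( w + v,\; s + \tfrac{1}{2}\det(w, v) \bigr)
\]
yields
\[
w_{j+1} = w_j + (z_{j+1} - z_j), \qquad s_{j+1} = s_j + \tfrac{1}{2}\det(w_j,\, z_{j+1} - z_j).
\]
Since $w_1 = z$, an easy induction gives $w_j = z_j$ for every $j$; in particular $w_5 = z$, so the claim about the planar part is immediate, and only the accumulation of the $t$-coordinate requires work.

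For $\sigma_z^+$ the four horizontal increments are $2a, 2b, -2a, -2b$ and the intermediate positions are $z,\, z+2a,\, z+2a+2b,\, z+2b$. I would write out
\[
2(s_5 - s_1) = \det(z, 2a) + \det(z + 2a, 2b) + \det(z + 2a + 2b, -2a) + \det(z + 2b, -2b)
\]
and expand by bilinearity. Every term containing $z$ appears with both signs and cancels in pairs (this is essentially the shoelace identity for the closed planar quadrilateral). The surviving pure $a,b$-terms collect to $\det(2a, 2b) - \det(2b, 2a) = 8\det(a,b)$, so $s_5 = t + 4\det(a,b)$ as claimed.

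The case of $\sigma_z^-$ follows by an entirely parallel computation with the signs of the increments reversed; the same cancellation of the $z$-dependent terms takes place, leaving the analogous expression with the opposite sign. There is no real obstacle here beyond keeping signs straight when expanding the four bilinear forms; conceptually the only ingredients are the definition of the lift and the Heisenberg group law.
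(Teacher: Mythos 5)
Your proposal is correct and takes essentially the same route as the paper: both unwind the lift via the group law, observe that the planar part returns to $z$, and compute that the accumulated $t$-increment is $4\det(a,b)$ (the paper evaluates the single product $2a \cdot 2b \cdot (-2a) \cdot (-2b) = (0,0,4\det(a,b))$ directly, while you sum the increments $\tfrac{1}{2}\det(w_j, z_{j+1}-z_j)$ and cancel the $z$-terms, which is the same computation organized slightly differently).
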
 

\begin{proof} Write $\sigma^{+} := (z_{1},z_{2},z_{3},z_{4},z_{5})$ for brevity, where 
\begin{displaymath} z_{1} = z = z_{5}, \quad z_{2} = z + 2a, \quad z_{3} = z+ 2a + 2b, \quad \text{and} \quad z_{4} = z + 2b. \end{displaymath}
By \eqref{liftDef}, the $p$-lift of the loop $\sigma^{+}$ is the sequence consisting of the points
\begin{displaymath} p, \quad p \cdot [z_{2} - z_{1}], \quad p \cdot [z_{2} - z_{1}] \cdot [z_{3} - z_{2}], \quad p \cdot [z_{2} - z_{1}] \cdot [z_{3} - z_{2}] \cdot [z_{4} - z_{3}], \end{displaymath} 
and 
\begin{displaymath} p \cdot [z_{2} - z_{1}] \cdot [z_{3} - z_{2}] \cdot [z_{4} - z_{3}] \cdot [z_{5} - z_{4}] = p \cdot 2a \cdot 2b \cdot (-2a) \cdot (-2b).  \end{displaymath}
So, the lift terminates at $p \cdot 2a \cdot 2b \cdot (-2a) \cdot (-2b)$, and one may easily compute that
\begin{displaymath} 2a \cdot 2b \cdot (-2a) \cdot (-2b) = (0,0,2a_{1}2b_{2} - 2a_{2}2b_{1}) = (0,0,4\det(a,b)), \end{displaymath} 
so $p \cdot 2a \cdot 2b \cdot (-2a) \cdot (-2b) = (z,t + 4\det(a,b))$. The proof is similar for $\sigma^{-}$. \end{proof}

Now, we are prepared to study the structure of symmetric sets in $\He$:
\begin{proposition}\label{discreteStructure} Let $E \subset \He$ be symmetric, and assume that $0,a,b \in \pi(E)$. Then
\begin{displaymath} 2\Z a + 2\Z b \subset \pi(E), \end{displaymath}
and and for all $z \in 2\Z a \times 2\Z b$, there exists a number $t_{z} \in \R$ such that
\begin{displaymath} (z,t_{z} + 4\Z \det(a,b)) \subset E. \end{displaymath}
\end{proposition}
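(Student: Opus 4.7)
The plan is to assemble Proposition \ref{discreteStructure} directly from the earlier lemmas; essentially no new ideas are needed, just a careful bookkeeping of lifts and induction.

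First I would dispose of the planar statement $2\Z a + 2\Z b \subset \pi(E)$. Since $E$ is symmetric, $\pi(E)$ is symmetric in $\R^{2}$ by the "nearly trivial" lemma, and by hypothesis $0,a,b \in \pi(E)$. Then Lemma \ref{symmetricR2} gives $(\Z a + \Z b) \setminus [(2\Z + 1)a + (2\Z + 1)b] \subset \pi(E)$, and in particular every point of the form $2ma + 2nb$ lies in $\pi(E)$. (In the degenerate case where $a,b$ are linearly dependent, one can equally appeal to the connected-sequences-are-checkers-sequences lemma.)

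Next I fix $z \in 2\Z a + 2\Z b$; by the first part there is at least one point $p_{0} = (z,t_{z}) \in E$ with $\pi(p_{0}) = z$. The key input is Lemma \ref{symmetriLoops}, which tells me that both loops $\sigma_{z}^{+}$ and $\sigma_{z}^{-}$ are checkers sequences in $\pi(E)$ beginning and ending at $z$. Since they start at $z = \pi(p_{0})$, Lemma \ref{liftingInE} guarantees that the $p_{0}$-lifts of $\sigma_{z}^{+}$ and $\sigma_{z}^{-}$ lie entirely in $E$. In particular, the terminal points of these lifts belong to $E$, and by the loop-lifting calculation they are exactly $(z, t_{z} + 4\det(a,b))$ and $(z, t_{z} - 4\det(a,b))$.

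Finally I would close the argument by induction on $|k|$. Assume $p_{k} := (z, t_{z} + 4k \det(a,b)) \in E$; applying the same loop-lifting argument but with base point $p_{k}$ in place of $p_{0}$, the $p_{k}$-lifts of $\sigma_{z}^{+}$ and $\sigma_{z}^{-}$ are again contained in $E$ (by Lemmas \ref{symmetriLoops} and \ref{liftingInE}), and their endpoints are $(z, t_{z} + 4(k\pm 1)\det(a,b))$. This propagates membership to all $k \in \Z$, yielding $(z, t_{z} + 4\Z \det(a,b)) \subset E$. There is no genuine obstacle here: the only point deserving care is that the loop-lifting lemma was stated for an arbitrary base point $p = (z,t)$, so each inductive step works verbatim with the new $t$-coordinate.
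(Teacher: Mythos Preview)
Your proof is correct and follows essentially the same approach as the paper's: both establish the planar inclusion via Lemma \ref{symmetricR2}, then lift the checkers loops $\sigma_{z}^{\pm}$ through Lemma \ref{liftingInE} and iterate using the loop-lifting computation to climb the $t$-axis. The paper compresses the induction into the phrase ``the same argument can be iterated,'' while you spell it out, but the content is identical.
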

\begin{proof} Since $\pi(E)$ is symmetric, the first claim follows immediately from Lemma \ref{symmetricR2}. Consequently, for $z \in 2\Z a + 2\Z b \subset \pi(E)$, there exists $t_{z} \in \R$ such that $p = (z,t_{z}) \in E$. The sequences $\sigma^{+}_{z}$ and $\sigma^{-}_{z}$ are checkers loops, so their $p$-lifts are contained in $E$ by Lemma \ref{liftingInE}. In particular, their endpoints are contained in $E$, and these points are
\begin{equation}\label{form15} \{p_{1},p_{2}\} := (z,t_{z} \pm 4\det(a,b)) \end{equation}
by the previous lemma. Now, the same argument can be iterated, replacing $p$ by the two points in \eqref{form15} (that is, considering the $p_{1}$- and $p_{2}$-lifts of $\sigma^{\pm}_{z}$, which again terminate in the set $(z, t_{z} + 4\Z\det(a,b)) \cap E$). This proves the second statement.  \end{proof}

\subsection{Structure of symmetric $3$-regular sets in $\He$} 
\begin{thm}\label{sym3reg} Let $E \subset \He$ be symmetric and $3$-regular. Then $E$ is contained on a vertical plane.
\end{thm}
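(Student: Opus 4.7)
The plan is to argue by contradiction. Suppose $E$ is not contained in any vertical plane. Since $\pi^{-1}(L)$ for an affine line $L \subset \R^{2}$ is precisely a vertical plane, this assumption is equivalent to saying that $\pi(E)$ is not contained in any line. The first reduction is to normalize the base point. The class of symmetric $3$-regular subsets of $\He$ is preserved under left translations (a direct calculation gives $\Sigma_{q_{0} \cdot p}(q_{0} \cdot q) = q_{0} \cdot \Sigma_{p}(q)$ using $\overline{q_{0}^{-1} \cdot q_{0}} = 0$ in the exponent) and under Heisenberg dilations $\delta_{r}$ (since $\delta_{r}$ is a group homomorphism that commutes with $p \mapsto \bar p$). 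We may therefore translate so that $0 \in E$, and then $\pi(E)$ is a symmetric planar set containing $0$ and not lying on any line, so there exist $a,b \in \pi(E)$ with $\det(a,b) \neq 0$.

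With this setup, Proposition \ref{discreteStructure} installs a rigid skeleton inside $E$: the horizontal lattice $2\Z a + 2\Z b$ is contained in $\pi(E)$, and over each lattice point $z$ sits a vertical arithmetic progression $(z,t_{z} + 4\Z \det(a,b)) \subset E$. This skeleton is a countable set, hence has $\calH^{3}$-measure zero. The lower bound in $3$-regularity then forces the existence of a point $p^{\ast} \in E$ lying off the skeleton. Two scenarios can occur: either (a) $\pi(p^{\ast}) \notin 2\Z a + 2\Z b$, so $p^{\ast}$ refines the horizontal lattice, or (b) $\pi(p^{\ast}) = z^{\ast}$ lies in the lattice but the $t$-coordinate of $p^{\ast}$ is not of the form $t_{z^{\ast}} + 4k\det(a,b)$, so $p^{\ast}$ refines the vertical progression.

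The core iterative step is to re-apply Proposition \ref{discreteStructure} to the enlarged triples furnished by $p^{\ast}$. In scenario (a), the new generating triple $\{0,a,\pi(p^{\ast})\}$ (after possibly relabelling so $\pi(p^{\ast})$ is independent of $a$) gives a strictly larger subgroup of $\pi(E)$; combined with the earlier lattice, one obtains a denser subset of $\pi(E)$. In scenario (b), the two points $0$ and $p^{\ast}$ (together with a third structured point) generate a strictly finer vertical progression inside $E$ over $z^{\ast}$. Each round of the bootstrap is ``fed'' by $3$-regularity, which always supplies new points outside the current countable skeleton. Passing to the limit and using that $E$ is closed (as any $\calH^{3}$-regular set is), one concludes that $\pi(E)$ contains a dense subset of some open disk $U \subset \R^{2}$, and that over each $z$ in this dense subset, $E \cap \pi^{-1}(z)$ is dense in a vertical interval. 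Closedness of $E$ then upgrades this to $E \supset \pi^{-1}(U')$ for some nonempty open set $U' \subset U$, i.e.\ $E$ contains a nonempty open subset of $\He$.

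This is the desired contradiction: since $\He$ has Hausdorff dimension $4$ with respect to the Kor\'anyi metric, every nonempty open subset of $\He$ has $\calH^{3}$-measure $+\infty$ on any ball it contains, in conflict with the upper bound $\calH^{3}(E \cap B(0,r)) \leq Ar^{3} < \infty$. The hardest part of the argument is the iterative densification in the third paragraph: one must rule out the possibility that all new points produced from $3$-regularity happen to fall into some fixed dilated lattice $\tfrac{1}{N}(\Z a + \Z b)$ forever. The way to exclude this is to observe that any such countable confinement of $\pi(E)$ would force $E$ into a countable union of vertical lines, which would make $\calH^{3}(E) = 0$ locally (since vertical lines are $2$-regular, hence $\calH^{3}$-null in the Kor\'anyi metric), contradicting the $3$-regularity lower bound and thereby compelling the bootstrap to produce a genuinely new direction at each stage.
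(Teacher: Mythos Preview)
Your approach diverges substantially from the paper's, and the divergence is in the wrong direction: you are working much harder than necessary, and the hard part you identify has genuine gaps.

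The paper's proof is a one-shot counting argument with no iteration. Once Proposition \ref{discreteStructure} produces the skeleton
\[
\{(z,\,t_{z}+4k\det(a,b)) : z\in 2\Z a+2\Z b,\ k\in\Z\}\subset E,
\]
one simply counts that $B(0,M)$ contains $\sim_{a,b} M^{4}$ of these points: $B(0,M)$ contains a box $[-L,L]^{2}\times[-L^{2},L^{2}]$ with $L\sim M$, giving $\sim L^{2}$ horizontal lattice points and, over each, $\sim L^{2}/|\det(a,b)|$ points of the vertical progression. These points are pairwise $\rho$-separated with $\rho\sim_{a,b}1$, so the lower $3$-regularity bound on each ball $B(p,\rho)$ gives $\calH^{3}(E\cap B(0,M))\gtrsim_{a,b} M^{4}\rho^{3}$, contradicting the upper bound $\lesssim M^{3}$ for $M$ large. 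That is the entire proof.

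Your bootstrap, by contrast, is not only unnecessary but incomplete as written. In scenario (a), re-applying Proposition \ref{discreteStructure} to $\{0,a,\pi(p^{\ast})\}$ produces the lattice $2\Z a+2\Z\pi(p^{\ast})$, which is in general neither finer than nor comparable to $2\Z a+2\Z b$; there is no mechanism forcing successive lattices to nest or to fill out a disk. In scenario (b) you assert a strictly finer vertical progression without constructing it. Most seriously, your endgame requires \emph{simultaneous} horizontal and vertical density: to make the vertical step $4|\det(a,b)|$ small you must drive the generators toward collinearity, which works against horizontal density. Nothing in your scheme reconciles these competing demands, so the conclusion that $E$ contains an open set is not established. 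The paper's insight is that the skeleton from a single application of Proposition \ref{discreteStructure} is already ``$4$-dimensional'' in the relevant discrete sense, and that alone suffices.
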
 

\begin{proof} Assume to the contrary: $E$ is not contained on a vertical plane, which means that $\pi(E) \subset \R^{2}$ is not contained on a line. After a translation, one may assume that $0 \in \pi(E)$. Further, there exist two linearly independent vectors $a,b \in \pi(E) \setminus \{0\}$. From Proposition \ref{discreteStructure}, one infers that $E$ contains all points of the form
\begin{equation}\label{form16} (z,t_{z} + 4k \det(a,b)), \qquad (z,k) \in (2\Z a \times 2\Z b) \times \Z, \end{equation}
form some $t_{z} \in \R$. Now, to reach a contradiction, it remains to count how many of these points there are in $B(0,M)$, for a suitably large $M \geq 1$, and find that this contradicts the $3$-regularity of $E$. Note that $B(0,M)$ contains a box of the form $R := [-L,L] \times [-L,L] \times [-L^{2},L^{2}]$, where $L \sim M$. Evidently, $R$ contains $\sim_{a,b} M^{4}$ points of the form \eqref{form16}, where the implicit constant depends on $a,b$, and $\det(a,b) \neq 0$. Also, all points of the form \eqref{form16} are pairwise separated by $\rho \sim_{a,b} 1$. Since 
\begin{displaymath} \calH^{4}(E \cap B(p,\rho)) \gtrsim \rho^{3}, \qquad p \in E, \end{displaymath}
by the $3$-regularity of $E$, one concludes that
\begin{displaymath} M^{4}\rho^{3} \lesssim_{a,b} \calH^{3}(E \cap B(0,M)) \lesssim M^{3}. \end{displaymath} 
This gives the desired contradiction for $M \gg \rho^{-3} \sim_{a,b} 1$. \end{proof}

\begin{remark} Any union of horizontal lines contained in a fixed vertical plane is a symmetric set. Such unions can easily be $3$-regular without covering all of the plane. So, Theorem \ref{sym3reg} cannot be upgraded to the statement that closed symmetric $3$-regular sets are vertical planes. \end{remark}

\section{The local symmetry condition and the weak geometric lemma} 
The following notion is a relaxed and localised version of symmetry. It is a slightly weaker variant of a condition appearing in \cite[Section 4]{DS1}:
\begin{definition}\label{symmetry} Fix $\tau > 0$. A closed set $E \subset \He$ is called \emph{$\tau$-symmetric} in a ball $B(p,r)$, if for all $q_{1},q_{2} \in E \cap B(p,r)$, the exists $q_{1}' \in E \cap B(q_{1},r\tau)$ such that $\dist(\Sigma_{q_{1}'}(q_{2}),E) \leq \tau r$. \end{definition}
\begin{remark}\label{DScomparison} The original David-Semmes definition given at the head of \cite[Section 4]{DS1} is slightly simpler: the direct analogue here would say that $E$ is $\tau$-symmetric in $B(p,r)$, if for all $q_{1},q_{2} \in E \cap B(p,r)$ one has $\dist(\Sigma_{q_{1}}(q_{2}),E) \leq \tau r$. The technical problem behind the additional twist in Definition \ref{symmetry} is that the map $q_{1} \mapsto \Sigma_{q_{1}}(q_{2})$ is not Lipschitz in $\He$. This is easiest to observe when $q_{2} = 0$, because $q_{1} \mapsto \Sigma_{q_{1}}(0)$ is essentially the projection to the $xy$-plane: by Lemma \ref{lemma1},
\begin{displaymath} \Sigma_{(x,y,t)}(0) = (x,y,t) \cdot \overline{(x,y,t)^{-1} \cdot 0} = (x,y,t) \cdot (x,y,-t) = (2x,2y,0) =: 2\pi(x,y,t). \end{displaymath}
Now consider points of the form $p = (x,0,0)$ and $q = (x,y,-xy/2)$ with $|x| \gg |y|$:
\begin{displaymath} d(\pi(p),\pi(q)) = d((x,0,0),(x,y,0)) = \|(0,-y,\tfrac{yx}{2})\| \gtrsim \sqrt{|yx|}, \end{displaymath}
yet
\begin{displaymath} d(p,q) = \|(-x,-y,-xy/2) \cdot (x,0,0)\| = \|(0,-y,-\tfrac{xy}{2} + \tfrac{xy}{2})\| \sim |y|, \end{displaymath} 
so $d(\pi(p),\pi(q)) \gg d(p,q)$, and hence $q_{1} \mapsto \Sigma_{q_{1}}(0)$ is not Lipschitz. So, in $\He$, the direct analogue of the David-Semmes definition would be quite unstable. To elaborate a little more, the next section contains an argument showing that non-$\tau$-symmetric balls $B(p,r)$ are rare: if one used the David-Semmes definition, such balls would, by definition, contain a pair of points $q_{1},q_{2}$ with $\dist(\Sigma_{q_{1}}(q_{2}),E) > \tau r$. But the argument in the next section needs more, namely that 
\begin{equation}\label{selfImprovement} \dist(\Sigma_{q'_{1}}(q'_{2}),E) \geq \tau r/2, \quad \text{whenever } d(q_{1}',q_{1}) \ll \tau r \text{ and } d(q_{2}',q_{2}) \ll \tau r. \end{equation}
Such a "self-improvement" of non-symmetry is automatic if both maps $q_{1},q_{1} \mapsto \Sigma_{q_{1}}(q_{2})$ are Lipschitz -- and, conversely, seems impossible to deduce in the present setting. On the other hand, if $B(p,r)$ is non-$\tau$-symmetric in the sense of Definition \ref{symmetry}, then \eqref{selfImprovement} is easily seen to be true (also using that $q_{2} \mapsto \Sigma_{q_{1}}(q_{2})$ is $1$-Lipschitz). \end{remark}

From now on, I only discuss closed $3$-regular sets $E \subset \He$. 
\begin{definition}[Local symmetry condition]\label{LSC} A closed $3$-regular set $E$ satisfies the \emph{local symmetry condition}, if the non-$\tau$-symmetric balls centred on $E$ satisfy a Carleson packing condition, for any $\tau > 0$. More precisely, for every $\tau > 0$ there is a constant $C_{\tau} > 0$ such that the following holds: for all $p_{0} \in E$ and $R > 0$, 
\begin{equation}\label{LS} \int_{0}^{R} \calH^{3}(\{p \in B(p_{0},R) : E \text{ is not $\tau$-symmetric in } B(p,r)\}) \, \frac{dr}{r} \leq C_{\tau}R^{3}. \end{equation}
\end{definition}

The main purpose of this section is to demonstrate that the local symmetry condition implies the weak geometric lemma for vertical $\beta$-numbers. Recall (from \cite[Definition 3.3]{CFO} for example) that the vertical $\beta$-number of $E$ in a ball $B(p,r)$ is the quantity
\begin{displaymath} \beta_{E}(p,r) := \inf_{\W} \sup_{y \in B(p,r) \cap E} \frac{d(y,\W)}{r}, \end{displaymath}
where the $\inf$ runs over all vertical planes $\W$ (that is, translates of planes containing the $t$-axis). The \emph{weak geometric lemma for vertical $\beta$-numbers}, introduced in \cite[Definition 3.5]{CFO}, is the statement \eqref{WGL} below: in short, balls centred on $E$ with non-negligible vertical $\beta$-numbers satisfy a Carleson packing condition.
\begin{proposition}\label{LSImpliesWGL} Assume that $E \subset \He$ is a closed $3$-regular set satisfying the \textup{(LSC)}. Then, for any $\epsilon > 0$, the estimate
\begin{equation}\label{WGL} \int_{0}^{R} \calH^{3}(\{p \in B(p_{0},R) : \beta_{E}(p,r) \geq \epsilon\}) \, \frac{dr}{r} \leq CR^{3} \end{equation} 
holds for all $p_{0} \in E$ and $R > 0$. The constant $C \geq 1$ depends on $\epsilon$, the constants in the \textup{(LSC)}, and the $3$-regularity constant of $E$.
\end{proposition}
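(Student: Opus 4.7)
The plan is to follow the David-Semmes template for this implication: first prove a quantitative ``symmetric implies close to a plane'' statement by a compactness/contradiction argument, then convert the (LSC) Carleson packing into the (WGL) Carleson packing via a finite change of variables.

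\textbf{Step 1 (Quantitative approximation).} I would first prove that for every $\epsilon > 0$ (with the $3$-regularity constant $A$ fixed) there exist $\tau = \tau(\epsilon, A) > 0$ and $N = N(\epsilon, A) \in \N$ such that if $E$ is closed and $3$-regular, $p_{0} \in E$, and $E$ is $\tau$-symmetric in each of the balls $B(p_{0}, 2^{j}r)$ for $j = 0, 1, \ldots, N$, then $\beta_{E}(p_{0}, r) \leq \epsilon$. The proof is by contradiction: otherwise one finds counterexamples $E_{n} \ni p_{n}$ with $\tau_{n} \to 0$ and $r_{n} > 0$, where $E_{n}$ is $\tau_{n}$-symmetric in $B(p_{n}, 2^{j}r_{n})$ for $j = 0, \ldots, n$ yet $\beta_{E_{n}}(p_{n}, r_{n}) > \epsilon$. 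After translating to $p_{n} = 0$ and applying $\delta_{1/r_{n}}$ one obtains a normalised sequence $\tilde E_{n} \ni 0$ with the same properties at radius $1$. A Blaschke/diagonal argument yields a subsequence converging in Hausdorff distance on every compact ball to a closed limit set $E$ which inherits $3$-regularity with constant $A$. To verify that $E$ is globally symmetric, fix $q_{1}, q_{2} \in E$, pick $R$ with $q_{1}, q_{2} \in B(0, R)$ and a single $j_{0}$ with $2^{j_{0}} \geq R$; for $n$ large, choose $q_{i,n} \in \tilde E_{n} \cap B(0, R)$ converging to $q_{i}$, and use $\tau_{n}$-symmetry in $B(0, 2^{j_{0}})$ to produce $q'_{1,n} \in B(q_{1,n}, \tau_{n} 2^{j_{0}}) \cap \tilde E_{n}$ with $\dist(\Sigma_{q'_{1,n}}(q_{2,n}), \tilde E_{n}) \leq \tau_{n} 2^{j_{0}}$. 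Since $j_{0}$ is fixed by $R$, the absolute error $\tau_{n} 2^{j_{0}} \to 0$, and then continuity of $\Sigma$ together with Hausdorff convergence forces $\Sigma_{q_{1}}(q_{2}) \in E$. Theorem \ref{sym3reg} now places $E$ inside a vertical plane $\W$, giving $\beta_{E}(0, 1) = 0$. A short upper semi-continuity argument (using $\W$ as a trial plane for each $\tilde E_{n}$ together with one-sided Hausdorff convergence on $\overline{B(0, 1)}$) yields $\beta_{\tilde E_{n}}(0, 1) \to 0$, contradicting $\beta_{\tilde E_{n}}(0, 1) > \epsilon$.

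\textbf{Step 2 (From (LSC) to (WGL)).} With $\tau = \tau(\epsilon)$ and $N = N(\epsilon)$ fixed from Step 1, the contrapositive says: whenever $\beta_{E}(p, r) \geq \epsilon$, there is some $j \in \{0, \ldots, N\}$ such that $E$ fails to be $\tau$-symmetric in $B(p, 2^{j}r)$. Accordingly,
\[ \int_{0}^{R} \calH^{3}(\{p \in B(p_{0}, R) \cap E : \beta_{E}(p, r) \geq \epsilon\}) \, \frac{dr}{r} \leq \sum_{j = 0}^{N} \int_{0}^{R} \calH^{3}(\{p \in B(p_{0}, R) \cap E : E \text{ is not $\tau$-symmetric in } B(p, 2^{j}r)\}) \, \frac{dr}{r}. \]
The substitution $s = 2^{j}r$ (so that $ds/s = dr/r$) rewrites each summand as an integral over $s \in (0, 2^{j}R)$. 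Since $B(p_{0}, R) \subset B(p_{0}, 2^{N}R)$, the (LSC) estimate \eqref{LS} applied at scale $R' = 2^{N}R$ bounds each such integral by $C_{\tau}(2^{N}R)^{3}$; summing the $N + 1$ terms yields $(N + 1) C_{\tau} 2^{3N} R^{3}$, which is the required $CR^{3}$ bound in \eqref{WGL}.

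\textbf{Main obstacle.} The difficult step is Step 1. The Heisenberg-specific difficulty is that the map $q_{1} \mapsto \Sigma_{q_{1}}(q_{2})$ is not Lipschitz (cf.\ Remark \ref{DScomparison}), so the limit argument depends delicately on the \emph{absolute} error $\tau_{n} \cdot 2^{j_{0}}$ vanishing rather than just $\tau_{n}$. This is precisely what forces the multiscale hypothesis ``$\tau$-symmetric in $B(p_{0}, 2^{j}r)$ for $j = 0, \ldots, N$'' in Step 1: a limit that is only locally symmetric inside one ball would not suffice to invoke Theorem \ref{sym3reg}, whose proof iterates Proposition \ref{discreteStructure} to produce points of $E$ at arbitrarily large scales. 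The mild price paid — a factor $(N + 1) C_{\tau} 2^{3N}$ in the final constant — is absorbed harmlessly in Step 2.
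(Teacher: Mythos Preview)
Your proposal is correct and follows essentially the same compactness-and-contradiction route as the paper: reduce to Theorem~\ref{sym3reg} by passing to a Hausdorff limit, then feed the resulting ``big $\beta$ implies non-symmetry at a larger scale'' statement into the Carleson packing \eqref{LS}. The only difference is packaging: the paper proves a cleaner single-scale lemma (Lemma~\ref{betasAndSymmetry}), namely that $\beta_{E}(p,r)\geq\epsilon$ forces $E$ to be non-$\tau^{2}$-symmetric in the \emph{one} enlarged ball $B(p,r/\tau)$, whereas you spread the failure over a finite dyadic family $B(p,2^{j}r)$, $j=0,\ldots,N$, and pay a union bound in Step~2. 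Both arguments hinge on the same observation you isolate in your ``Main obstacle'' paragraph: in the limit one only needs joint \emph{continuity} of $(q_{1},q_{2})\mapsto\Sigma_{q_{1}}(q_{2})$, and the absolute error $\tau_{n}\cdot(\text{fixed radius})\to 0$ suffices to push symmetry to the limit set.
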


The weak geometric lemma in $\R^{n}$ was originally introduced by David and Semmes, see \cite[Section 5]{DS1}. Theorem \ref{LSImpliesWGL} easily follows from the next lemma, which states that if $\beta_{E}(p,r) \geq \epsilon$, then there is a constant $\tau = \tau(\epsilon) > 0$ and a ball $B$ "comparable" to $B(p,r)$ such that $E$ is not $\tau$-symmetric in $B$. 

\begin{lemma}\label{betasAndSymmetry} For every $A,\epsilon > 0$ there is a constant $\tau = \tau(A,\epsilon) > 0$ such that the following holds. Assume that $E \subset \He$ is closed and $3$-regular with constant at most $A$, and $B(p,r)$ is a ball with $p \in E$ and $r > 0$ such that $\beta_{E}(p,r) \geq \epsilon$. Then, there exist two points $q_{1},q_{2} \in E \cap B(p,r/\tau)$ such that 
\begin{displaymath} \dist(\Sigma_{q_{1}'}(q_{2}),E) > \tau r \quad \text{ for all } q_{1}' \in E \cap B(q_{1},\tau r). \end{displaymath}
In particular, $E$ is not $\tau^{2}$-symmetric in $B(p,r/\tau)$.
\end{lemma}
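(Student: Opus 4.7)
The plan is to argue by contradiction via a compactness/blow-up scheme, with Theorem \ref{sym3reg} providing the rigidity that drives the contradiction. The qualitative slogan is: a closed $3$-regular symmetric set lies on a vertical plane, hence has zero $\beta$-number, so if the quantitative $\beta$ stays bounded away from zero, then symmetry must fail quantitatively at some pair of points.

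Suppose the lemma fails. Then, after fixing $A, \epsilon > 0$, I obtain a sequence $\tau_n \to 0^+$ and closed $3$-regular sets $E_n \subset \He$ with regularity constant at most $A$, together with centres $p_n \in E_n$ and radii $r_n > 0$, such that $\beta_{E_n}(p_n, r_n) \geq \epsilon$, while for every pair $q_1, q_2 \in E_n \cap B(p_n, r_n/\tau_n)$ there exists $q_1' \in E_n \cap B(q_1, \tau_n r_n)$ with $\dist(\Sigma_{q_1'}(q_2), E_n) \leq \tau_n r_n$. Applying the left-translation by $p_n^{-1}$ and the dilation $\delta_{1/r_n}$ (both are isometries/conformal maps that preserve the Kor\'anyi metric and $3$-regularity, and also preserve the quantities $\beta_E$, $\Sigma_{\cdot}(\cdot)$, and Heisenberg distance), I normalise to $p_n = 0$ and $r_n = 1$.

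Next I pass to a subsequential limit $E_\infty$. Since $\{E_n\}$ is a uniformly $3$-regular family of closed sets all containing $0$, a standard Blaschke-type selection argument (together with weak compactness of the measures $\mathcal{H}^3|_{E_n}$) produces a subsequence such that $E_n \to E_\infty$ in local Hausdorff distance on $\He$, with $E_\infty$ closed, $0 \in E_\infty$, and $E_\infty$ itself $3$-regular with constant depending only on $A$. I then claim $E_\infty$ is symmetric: given any $q_1, q_2 \in E_\infty$, choose $q_1^n, q_2^n \in E_n$ converging to them; since $q_1, q_2$ lie in a fixed ball $B(0, M)$ and $1/\tau_n \to \infty$, eventually $q_1^n, q_2^n \in B(0, 1/\tau_n)$. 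The failure hypothesis produces $(q_1^n)' \in E_n$ with $d((q_1^n)', q_1^n) \leq \tau_n$ and a point $z_n \in E_n$ with $d(z_n, \Sigma_{(q_1^n)'}(q_2^n)) \leq \tau_n$. Because $\Sigma_p(q) = p \cdot \overline{p^{-1} \cdot q}$ is jointly continuous, $(q_1^n)' \to q_1$, and $q_2^n \to q_2$, we get $z_n \to \Sigma_{q_1}(q_2)$, which therefore lies in the closed set $E_\infty$.

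Now Theorem \ref{sym3reg} forces $E_\infty$ to lie on a vertical plane $\W_\infty$. In particular $\beta_{E_\infty}(0,1) = 0$. Combining this with local Hausdorff convergence: for any $\delta > 0$, eventually $E_n \cap B(0, 1) \subset \{p : d(p, \W_\infty) < \delta\}$, so $\beta_{E_n}(0,1) \leq \delta$ for all large $n$, contradicting $\beta_{E_n}(0,1) \geq \epsilon$. This proves the existence of $\tau = \tau(A, \epsilon)$ and of the pair $q_1, q_2$ as required. The final sentence of the lemma is just a rewriting of Definition \ref{symmetry} with parameter $\tau^2$ and radius $r/\tau$: indeed $\tau^2 \cdot (r/\tau) = \tau r$, so the bad pair $q_1, q_2$ witnesses non-$\tau^2$-symmetry in $B(p, r/\tau)$.

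The main technical point is the passage to the limit in the symmetry relation: one must be careful that the weakened form of symmetry in Definition \ref{symmetry} (quantifying over $q_1'$ near $q_1$) still collapses to ordinary symmetry of $E_\infty$ in the blow-up, and this is exactly the reason why $\Sigma$ being merely continuous (rather than Lipschitz, compare Remark \ref{DScomparison}) is enough. A secondary routine point is verifying that compactness of $3$-regular sets in the Kor\'anyi metric yields a $3$-regular limit — this is standard but worth checking once against the concrete form of the metric.
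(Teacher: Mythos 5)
Your proposal is correct and follows essentially the same route as the paper: a counter-assumption with $\tau_n \to 0$, normalisation to $p=0$, $r=1$ by left-translation and dilation, local Hausdorff compactness of uniformly $3$-regular sets, continuity of $(p,q)\mapsto\Sigma_{p}(q)$ to show the limit set is symmetric, and Theorem \ref{sym3reg} to contradict $\beta \geq \epsilon$. The only cosmetic difference is that the paper transfers the lower bound $\beta_{E}(0,1)\geq\epsilon$ to the limit set, whereas you transfer the vanishing of $\beta$ back to the sequence; both are equivalent uses of the Hausdorff convergence.
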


\begin{proof} If the conclusion fails for sufficiently small $\tau > 0$, then $E$ is "essentially symmetric" in $B(p,r/\tau)$ and hence $E \cap B(p,r)$ should be contained in an arbitrarily small neighbourhood of a vertical plane, violating $\beta_{E}(p,r) \geq \epsilon$. This could be done so -- emulating arguments from the previous section -- that the dependence between $A,\epsilon$ and $\tau$ becomes effective; this approach seems so tedious, however, that I resort to compactness.

In other words, I make a counter assumption: for every $\tau = 1/i$, there exists a closed $3$-regular set $E_{i}$, with regularity constant at most $A$, and with the following properties:
\begin{itemize}
\item $0 \in E_{i}$ and $\beta_{E_{i}}(0,1) \geq \epsilon$,
\item For all $p,q \in E_{i} \cap B(0,i)$ there exists $p' \in E \cap B(p,1/i)$ with
\begin{displaymath} \dist(\Sigma_{p'}(q),E_{i}) \leq \frac{1}{i}. \end{displaymath}
\end{itemize}
A proper counter assumption would, in fact, allow for different balls $B(p_{i},ir_{i})$ for every $i \in \N$, but since all the assumptions and conclusions are translation and scaling invariant, one may reduce to the case $p_{i} \equiv 0$ and $r_{0} \equiv 1$. Replacing the sets $E_{i}$ by a subsequence, one may further assume that they converge locally in the Hausdorff metric to a closed $3$-regular set $E \subset \He$. By local convergence, I mean that the Hausdorff distance between 
\begin{displaymath} E_{i} \cap \overline{B(0,N)} \quad \text{and} \quad E \cap \overline{B(0,N)} \end{displaymath}
tends to zero for every $N \in \N$ fixed: it is well-known that $3$-regularity is preserved under such convergence. It is clear that $0 \in E$, so $E \neq \emptyset$. Also, the vertical $\beta$-numbers are stable under Hausdorff convergence:
\begin{displaymath} \beta_{E}(0,1) \geq \epsilon. \end{displaymath}
Now, I claim that $E$ is symmetric, which will immediately contradict Theorem \ref{sym3reg}. Pick distinct points $p,q \in E$, and find sequences $(p_{i}),(q_{i})$ with $p_{i},q_{i} \in E_{i}$, $p_{i} \to p$ and $q_{i} \to q$. Evidently $p_{i},q_{i} \subset E_{i} \cap B(0,i)$ for $i \in \N$ sufficiently large. Consequently, for these $i$, by definition of $E_{i}$, there exist points $p_{i}' \in E_{i} \cap B(p_{i},1/i)$ and $w_{i} \in E_{i}$ such that
\begin{displaymath} d(\Sigma_{p_{i}'}(q_{i}),w_{i}) \leq \frac{1}{i}. \end{displaymath} 
Clearly $\Sigma_{p_{i}'}(q_{i}) \to \Sigma_{p}(q)$ as $i \to \infty$, so also $d(\Sigma_{p}(q),w_{i}) \to 0$, and finally $\Sigma_{p}(q) \in E$, since $E$ is closed. This shows that $E$ is symmetric and completes the proof of Lemma \ref{betasAndSymmetry}. \end{proof} 

\begin{proof}[Proof of Proposition \ref{LSImpliesWGL}] Fix $p_{0} \in E$ and $R > 0$, and consider a point $p \in B(p_{0},R)$ and a radius $0 < r \leq R$ such that $\beta_{E}(p,r) \geq \epsilon > 0$. Then, Lemma \ref{betasAndSymmetry} says that $E$ is not $\tau^{2}$-symmetric in $B(p,r/\tau)$, for some $\tau$ depending only on $\epsilon$ and the $3$-regularity constant of $E$. Consequently,
\begin{align*} \int_{0}^{R} & \calH^{3}(\{p \in B(p_{0},R) : \beta_{E}(p,r) \geq \epsilon\}) \, \frac{dr}{r}\\
& \leq \int_{0}^{R} \calH^{3}(\{p \in B(p_{0},R) : E \text{ is not $\tau^{2}$-symmetric in } B(p,r/\tau)\}) \, \frac{dr}{r} \lesssim R^{3} \end{align*} 
by \eqref{LS}. This concludes the proof of Proposition \ref{LSImpliesWGL}.
\end{proof}

\section{Boundedness of singular integrals implies local symmetry}\label{SIOSection}

In this section, I show that if all singular integrals with admissible kernels (Definition \ref{admissibleKernels}) are $L^{2}$-bounded on a closed $3$-regular set $E \subset \He$, then $E$ satisfies the local symmetry condition -- and hence the weak geometric lemma for vertical $\beta$-numbers by Proposition \ref{LSImpliesWGL}. This concludes the proof of Theorem \ref{main}.

The remaining proofs in the paper are extremely similar to arguments in \cite[Sections 2-4]{DS1}, and they are only included for the reader's convenience. I start by making a few relevant definitions; then I recall the main steps of the proof in \cite{DS1}, and finally I give a few details for the parts which are slightly different in $\He$ and $\R^{n}$.  
\begin{definition}[Admissible kernels]\label{admissibleKernels} A smooth function $K \colon \He \setminus \{0\} \to \R$ is called an \emph{admissible kernel}, if the following requirements are met:
\begin{itemize} 
\item $K$ is horizontally antisymmetric, that is $K(\bar{p}) = -K(p)$ for $p \in \He \setminus \{0\}$,
\item $K$ satisfies
\begin{displaymath} |\nabla_{\He}^{j}K(p)| \leq C(j)\|p\|^{-3 - j}, \qquad j \in \{0,1,2,\ldots\}. \end{displaymath}
\end{itemize}
\end{definition}
The functions $K_{1}(p) = X\|p\|^{-2}$ and $K_{2}(p) = Y\|p\|^{-2}$ are the primary examples of admissible kernels, see the explicit formula above \cite[Definition 2.5]{CFO2} for the horizontal antisymmetry, and \cite[Proposition 3.11]{CM} for the derivative estimate. To an admissible kernel $K$, and a number $\epsilon > 0$, one associates an operator $T_{K,\epsilon}$,
\begin{displaymath} T_{K,\epsilon}\nu(p) := \int_{\{\|q^{-1} \cdot p\| > \epsilon\}} K(q^{-1} \cdot p) \, d\nu(q), \end{displaymath} 
acting on complex Borel measures $\nu$ with finite total variation. Given a positive, locally finite Borel measure $\mu$, one says that \emph{$T_{K}$ is bounded on $L^{2}(\mu)$}, if
\begin{displaymath} \|T_{K,\epsilon}(f\mu)\|_{L^{2}(\mu)} \leq C\|f\|_{L^{2}(\mu)}, \qquad f \in L^{1}(\mu) \cap L^{2}(\mu), \end{displaymath}
for some constant $C \geq 1$ independent of $\epsilon > 0$. So, as is standard in the field, one altogether omits discussing the existence of the operator $T_{K}$. Thus, speaking about its $L^{2}$-boundedness is just a short way of expressing that the operators $T_{K,\epsilon}$ are bounded \emph{uniformly} on $L^{2}(\mu)$. 

Here is the main result of the section:

\begin{thm}\label{C1ImpliesWGL} Assume that $E \subset \He$ is a closed $3$-regular set, and $T_{K}$ is bounded on $L^{2}(\calH^{3}|_{E})$ for all admissible kernels $K$. Then $E$ satisfies the local symmetry condition, Definition \ref{LSC}. 
\end{thm}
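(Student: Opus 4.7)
The plan is to follow the chain of implications (C1) $\Longrightarrow$ (C2) $\Longrightarrow$ (LSC) from \cite[Sections 2-4]{DS1}, adapted to $\He$ by leveraging one clean algebraic identity: for all admissible $K$ and all $p, q \in \He$,
\begin{equation*}
K(p^{-1} \cdot \Sigma_{p}(q)) \;=\; K(\overline{p^{-1} \cdot q}) \;=\; -K(p^{-1} \cdot q),
\end{equation*}
which is immediate from Lemma \ref{lemma1} and horizontal antisymmetry. This is the Heisenberg surrogate for the Euclidean identity used by David–Semmes to link antisymmetric kernels to the symmetric-point construction.

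First I would introduce an intermediate condition (C2): for every admissible $K$ there exists $C_{K} < \infty$ such that for every $p_{0} \in E$ and $R > 0$ and every $\varphi \in L^{\infty}(\calH^{3}|_{E})$ supported in $B(p_{0}, R)$,
\begin{equation*}
\int_{B(p_{0}, R)} |T_{K,\epsilon}(\varphi \calH^{3}|_{E})|\, d\calH^{3}|_{E} \;\leq\; C_{K} \|\varphi\|_{\infty} R^{3}, \qquad \epsilon > 0.
\end{equation*}
The implication (C1) $\Longrightarrow$ (C2) is routine Calderón–Zygmund theory on the $3$-regular space of homogeneous type $(E, d, \calH^{3}|_{E})$: since every admissible $K$ is a $3$-dimensional Calderón–Zygmund kernel in the sense of \cite{CFO2}, the uniform $L^{2}$ boundedness of $T_{K, \epsilon}$ together with standard Cotlar/CZ decompositions yields $L^{2}\to L^{2}$ (hence $L^{2}\to L^{1}$ on balls via Cauchy–Schwarz and $3$-regularity) bounds with constants independent of $\epsilon$.

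For the key implication (C2) $\Longrightarrow$ (LSC), I would argue by contradiction: assume \eqref{LS} fails for some $\tau > 0$. By a Whitney-type selection — exactly as in \cite[Section 4]{DS1} — one extracts, for every large constant $M$, a collection of balls $B(p_{j}, r_{j})$ centred on $E$ in which $E$ fails to be $\tau$-symmetric, and whose total Carleson mass exceeds $M R^{3}$. For each such bad ball one has witnesses $q_{1}, q_{2} \in E \cap B(p_{j}, r_{j})$ with $\dist(\Sigma_{q_{1}'}(q_{2}), E) > \tau r_{j}$ for every $q_{1}' \in E \cap B(q_{1}, \tau r_{j})$. The plan is then to construct a single admissible kernel $K$ (it is enough to use a smooth bump of the form $K(p) = \|p\|^{-3}[\phi(p) - \phi(\bar{p})]$ with $\phi$ supported in a fixed annulus) such that, after appropriate rescaling and localisation, the horizontal antisymmetry forces a quantitative lower bound of the form
\begin{equation*}
\int_{E \cap B(p_{j}, r_{j})} |T_{K,\epsilon_{j}}(\1_{E \cap B(p_{j}, r_{j})} \calH^{3}|_{E})|\, d\calH^{3}|_{E} \;\gtrsim\; \eta(\tau)\, r_{j}^{3},
\end{equation*}
with $\eta(\tau) > 0$. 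The idea is that if the symmetric partner of $q_{2}$ (near some admissible $q_{1}'$) were also in $E$ within error $\tau r_{j}$, the integrand near $q_{2}$ and near its partner would cancel by $K(\bar{\cdot}) = -K(\cdot)$; the failure of $\tau$-symmetry means the cancellation is quantitatively destroyed, producing a net contribution bounded below by a power of $\tau$ times the mass of a $\tau r_{j}$-neighbourhood of $q_{2}$, which by $3$-regularity is $\gtrsim \tau^{3} r_{j}^{3}$. Summing these lower bounds over the selected bad balls, after a standard almost-orthogonality argument between Whitney scales, produces a quantity $\gtrsim \eta(\tau) M R^{3}$, contradicting the (C2) bound applied with $\varphi = \1_{B(p_{0}, CR)}$ once $M$ is chosen large enough.

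The hard part will be the quantitative lower bound in the previous paragraph, and this is precisely where the "twisted" form of Definition \ref{symmetry} — allowing the nudge $q_{1} \mapsto q_{1}'$ — is essential. As Remark \ref{DScomparison} emphasises, the map $q_{1} \mapsto \Sigma_{q_{1}}(q_{2})$ is not Lipschitz in $\He$, so one cannot argue, as in $\R^{n}$, that a single failure of the direct symmetric relation persists under small perturbations of $q_{1}$. The remedy is to parametrise test kernels by a small auxiliary region around $q_{1}$: after averaging $q_{1}'$ over $E \cap B(q_{1}, \tau r)$, the hypothesis $\dist(\Sigma_{q_{1}'}(q_{2}), E) > \tau r$ uniformly in $q_{1}'$ gives a genuine mass defect of $E$ in a ball of radius $\sim \tau r$ around the intended partner point, and the $1$-Lipschitz property of $q_{2} \mapsto \Sigma_{q_{1}}(q_{2})$ lets one localise the integration in $q_{2}$ without loss. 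Executing this bookkeeping carefully — tracking powers of $\tau$ through the smoothing, localisation, and $3$-regularity estimates — is the main technical step, but it is otherwise parallel to \cite[Sections 2-4]{DS1}.
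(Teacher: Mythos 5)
Your overall architecture --- (C1) $\Rightarrow$ (C2) $\Rightarrow$ (LSC), with horizontal antisymmetry playing the role of Euclidean oddness via $K(\overline{p^{-1}\cdot q}) = -K(p^{-1}\cdot q)$ --- is the paper's, and you correctly identify why the allowance $q_{1}\mapsto q_{1}'$ in Definition \ref{symmetry} is forced by the non-Lipschitzness of $q_{1}\mapsto\Sigma_{q_{1}}(q_{2})$. But your intermediate condition (C2) is the wrong one, and this is a genuine gap rather than bookkeeping. The bound you state, $\int_{B(p_{0},R)}|T_{K,\epsilon}(\varphi\,\calH^{3}|_{E})|\,d\calH^{3}|_{E}\leq C_{K}\|\varphi\|_{\infty}R^{3}$, is a single-scale estimate (it follows from (C1) by Cauchy--Schwarz alone), whereas the LSC is a Carleson packing condition summed over all scales $2^{-k}\leq R$. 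Your scale-by-scale lower bounds involve different truncations $\epsilon_{j}$ and different input functions $\1_{B_{j}}$, so their sum is not dominated by $\int|T_{K,\epsilon}(\1_{B(p_{0},CR)}\,\calH^{3}|_{E})|$ for any single $\epsilon$; the ``standard almost-orthogonality argument between Whitney scales'' you invoke is exactly the missing content, and the absolute values prevent any telescoping. The correct (C2), both in \cite[Section 3]{DS1} and in the paper, is the square-function condition $\sum_{2^{-k}\leq R}\int_{B(p_{0},R)}\bigl|\int_{E}\psi_{2^{-k}}(q^{-1}\cdot p)\,d\calH^{3}(q)\bigr|^{2}\,d\calH^{3}(p)\leq CR^{3}$, and extracting it from (C1) requires applying the uniform $L^{2}$ bound to the random-sign kernels $K=\sum_{k=-N}^{N}\epsilon_{k}\psi_{2^{-k}}$ (all sign choices, then averaging). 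Without that step the contradiction at the end cannot close: each additional scale of bad balls adds $\eta(\tau)R^{3}$ to your lower bound but also another $C_{K}R^{3}$ to the only upper bound you have.

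A second, smaller gap: a single kernel $K(p)=\|p\|^{-3}[\phi(p)-\phi(\bar p)]$ with $\phi$ supported in a fixed annulus will not produce the lower bound $\gtrsim\eta(\tau)r_{j}^{3}$. The paper's test functions $\psi^{j}$ are supported only on a small ball $B_{j}$ (containing the rescaled $q_{2}$) together with its reflection $\Sigma_{0}(B_{j})$, which the rescaled $E$ misses entirely by the failure of $\tau$-symmetry; hence the integrand $\psi^{j}_{2^{-k}}(q_{1}'^{-1}\cdot p)$ is single-signed on $E$ and the contribution $\gtrsim_{\tau}1$ near $q_{2}$ cannot be cancelled. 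With an annular kernel, the rest of $E$ inside the annulus around $q_{1}'$ contributes with both signs and can wash out the defect, so failure of symmetry does not force the integral to be large. One genuinely needs the finite family $\{\psi^{j}\}_{j\leq N(\tau)}$ indexed by a $\tau$-net, since the location of the ``hole'' $\Sigma_{q_{1}'}(q_{2})$ varies with $q_{1}'$ and $q_{2}$.
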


\subsection{Steps of the proof} As I mentioned earlier, the proof of Theorem \ref{C1ImpliesWGL} follows extremely closely the argument of David and Semmes in \cite{DS1}. I will briefly explain the two main steps involved. 

In \cite{DS1}, David and Semmes study singular integrals associated to smooth, odd kernels $K \colon \R^{n} \setminus \{0\} \to \R$, satisfying the decay requirements from Definition \ref{admissibleKernels} for the Euclidean derivatives, and with "$3$" replaced by any integer $0 < m < n$ (the dimension of $E \subset \R^{n}$). For a fixed $m$-regular set $E \subset \R^{n}$, Condition "(C1)" in \cite{DS1} postulates that the operators $T_{K}$ associated to all such kernels $K$ are bounded on $L^{2}(\calH^{m}|_{E})$, in the same sense as above. According to \cite{DS1}, this is one possible definition for the uniform $m$-rectifiability of $E$.

In \cite[Section 3]{DS1}, David and Semmes show that condition "(C1)" implies another condition, known simply as "(C2)", which postulates the following (again for a fixed $m$-regular set $E \subset \R^{n}$): whenever $\psi \colon \R^{n} \to \R$ is smooth, odd, and has compact support, then
\begin{equation}\label{ConditionC2R} \sum_{2^{-k} \leq R} \int_{B(x_{0},R)} \left| \int_{E} \psi_{2^{-k}}(x - y) \, d\calH^{m}(y) \right|^{2} \, d\calH^{m}(x) \leq CR^{m}, \qquad x_{0} \in E, \: R > 0. \end{equation}
Here $\psi_{r}(z) := r^{-m}\psi(z/r)$ for $z \in \R^{n}$. 

Finding the Heisenberg analogue of the condition "(C2)" in $\He$ requires no imagination:
\begin{definition}[Condition (C2)] A closed $3$-regular set $E \subset \He$ satisfies condition (C2) if for all smooth, horizontally antisymmetric functions $\psi \colon \He \to \R$, with compact support $\spt \psi \subset \He \setminus \{0\}$, one has
\begin{equation}\label{ConditionC2} \sum_{2^{-k} \leq R} \int_{B(p_{0},R)} \left| \int_{E} \psi_{2^{-k}}(q^{-1} \cdot p) \, d\calH^{3}(q) \right|^{2} \, d\calH^{3}(p) \leq CR^{3}, \qquad p_{0} \in E, \: R > 0. \end{equation}
Here $\psi_{r}(q) = r^{-3}\psi(\delta_{r^{-1}}(q))$ for $q \in \He$.
\end{definition}

One can follow the argument in \cite[Section 3]{DS1} to arrive at the following conclusion:
\begin{proposition}\label{C1ImpliesC2} Assume that $E \subset \He$ is closed and $3$-regular, and all singular integrals associated to admissible kernels are bounded on $L^{2}(\calH^{3}|_{E})$. This is our assumption \textup{(C1)}. Then $E$ satisfies condition \textup{(C2)}.
\end{proposition}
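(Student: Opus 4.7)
The plan is to adapt the sign-randomisation argument from \cite[Section~3]{DS1} to the Heisenberg setting. Fix a smooth, horizontally antisymmetric $\psi \colon \He \to \R$ with $\spt \psi \subset \He \setminus \{0\}$; after an initial dilation, one may assume $\spt \psi \subset \{c^{-1} \leq \|p\| \leq c\}$ for some $c > 1$. Fix $p_{0} \in E$ and $R > 0$, set $k_{0} := \lceil \log_{2}(1/R) \rceil$, and for each $N \geq k_{0}$ and each sign vector $s = (\epsilon_{k})_{k=k_{0}}^{N} \in \{-1,+1\}^{N-k_{0}+1}$ define
\begin{displaymath} K^{s,N}(p) := \sum_{k=k_{0}}^{N} \epsilon_{k}\, \psi_{2^{-k}}(p), \qquad \psi_{r}(p) := r^{-3}\psi(\delta_{r^{-1}}p). \end{displaymath}
The first step is to check that each $K^{s,N}$ is an admissible kernel with constants \emph{independent} of $s$ and $N$. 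Horizontal antisymmetry is immediate because $\delta_{r}$ commutes with $p \mapsto \bar p$, so each summand is horizontally antisymmetric. For the derivative estimates, the supports $\spt \psi_{2^{-k}} \subset \{2^{-k}c^{-1} \leq \|p\| \leq 2^{-k}c\}$ have only $O_{c}(1)$-fold overlap, and on each of them $|\nabla_{\He}^{j}\psi_{2^{-k}}(p)| \lesssim 2^{(3+j)k} \sim \|p\|^{-3-j}$, with constants depending only on $\psi$ and $j$.

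The next step is to apply (C1). Choose $C = C(c)$ large enough that, for every $p \in B(p_{0},R)$ and every $k_{0} \leq k \leq N$, the map $q \mapsto \psi_{2^{-k}}(q^{-1} \cdot p)$ is supported in $B := B(p_{0}, CR)$. With $f := \1_{B}$ and any truncation $0 < \epsilon < 2^{-N}c^{-1}$ (so that the truncation is immaterial), one then has, for $p \in B(p_{0}, R)$,
\begin{displaymath} T_{K^{s,N},\epsilon}(f\calH^{3}|_{E})(p) \;=\; \sum_{k=k_{0}}^{N} \epsilon_{k}\, F_{k}(p), \qquad F_{k}(p) := \int_{E}\psi_{2^{-k}}(q^{-1} \cdot p)\, d\calH^{3}(q). \end{displaymath}
Averaging over $s$ via the Rademacher orthogonality $\E_{s}[\epsilon_{j}\epsilon_{k}] = \delta_{jk}$ yields the pointwise identity $\E_{s}|T_{K^{s,N},\epsilon}(f\calH^{3}|_{E})(p)|^{2} = \sum_{k=k_{0}}^{N}|F_{k}(p)|^{2}$. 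Integrating over $p \in B(p_{0},R)$, swapping $\E_{s}$ with the integral, and invoking the $L^{2}$-bound from (C1) for each $K^{s,N}$ give
\begin{displaymath} \int_{B(p_{0},R)} \sum_{k=k_{0}}^{N} |F_{k}(p)|^{2}\, d\calH^{3}(p) \;\leq\; C\, \calH^{3}(E \cap B) \;\lesssim\; R^{3}, \end{displaymath}
and letting $N \to \infty$ establishes \eqref{ConditionC2}.

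The main obstacle is that, taken at face value, (C1) only asserts $L^{2}$-boundedness of each individual $T_{K}$ with an operator-norm constant that may depend on $K$, whereas the argument above requires a single constant valid simultaneously for the whole family $\{K^{s,N}\}$. One therefore has to upgrade (C1) to a quantitative form $\|T_{K}\|_{L^{2}(\calH^{3}|_{E}) \to L^{2}(\calH^{3}|_{E})} \lesssim \|K\|_{\calA}$, where $\|K\|_{\calA}$ is a seminorm controlling finitely many of the constants $C(j)$ of Definition~\ref{admissibleKernels}. This is a standard uniform-boundedness / closed-graph argument on the Fr\'echet space of admissible kernels; once it is in place, the admissibility bounds for $K^{s,N}$ depend only on $\psi$ and $c$, and the scheme above closes, mirroring the Euclidean computation in \cite[Section~3]{DS1}.
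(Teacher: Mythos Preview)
Your proposal is correct and follows essentially the same approach as the paper, which itself merely sketches the sign-randomisation argument of \cite[Section~3]{DS1} and defers the details there; your write-up is in fact more explicit than the paper's own treatment. The uniformity issue you flag is genuine, and your proposed resolution via a uniform-boundedness argument on the Fr\'echet space of admissible kernels is the standard way to close that gap.
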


Indeed, David and Semmes start with an odd function $\psi$, as in "(C2)". Then, to verify \eqref{ConditionC2R}, they consider antisymmetric kernels $K$ of the form
\begin{equation}\label{kernel} K(x) = \sum_{k = -N}^{N} \epsilon_{j} \cdot \psi_{2^{-k}}(x), \qquad x \in \R^{n}, \: N \in \N, \end{equation}
to which the boundedness assumption "(C1)" can be applied; here $\epsilon_{j} \in \{-1,1\}$. In the present setting, one rather starts with a horizontally antisymmetric function $\psi$, with compact support $\spt \psi \subset \He \setminus \{0\}$. Then \eqref{kernel} yields a horizontally antisymmetric kernel. The admissibility condition (ii) for $K$ follows from the assumption on $\spt \psi$: it implies that the terms in the sum defining $K(p)$ vanish for all $k$, except for those with $2^{-k} \sim_{\psi} \|p\|$. For such terms, one can estimate $|\nabla_{\He}^{j}\psi_{2^{-k}}(p)|$, $j \in \N$, by first noting that horizontal derivatives and the dilatations $\delta_{r}$ commute in the same way as Euclidean derivatives and dilatations, for example $X(\psi \circ \delta_{r})(p) = rX\psi(\delta_{r}(p))$. Consequently, the assumption (C1) about admissible kernels applies to $K$, and the proof of Proposition \ref{C1ImpliesC2} can be completed as in \cite{DS1}.

The proof of Theorem \ref{C1ImpliesWGL} has now been reduced to the claim that condition (C2) implies the local symmetry condition. Since the local symmetry condition in $\He$ is slightly different from the one employed by David and Semmes (recall Remark \ref{DScomparison}), I give all the details. It is a little questionable if this makes sense: the local symmetry condition in the present paper was slightly tweaked (compared to the original) exactly for the purpose that the following argument would work in the same way as in \cite[Section 4]{DS1}.

\begin{proof}[Proof of Theorem \ref{C1ImpliesWGL}] To show that the local symmetry condition is satisfied, one needs to fix $\tau \in (0,1)$, $p_{0} \in E$, $R > 0$, and verify the Carleson packing condition from \eqref{LS}, namely
\begin{equation}\label{form17} \int_{0}^{R} \calH^{3}(\{p \in B(p_{0},R) : E \text{ is not $\tau$-symmetric in } B(p,r)\}) \, \frac{dr}{r} \leq C_{\tau}R^{3}. \end{equation}

Before fixing a non-$\tau$-symmetric ball $B(p,r)$, I start with some preliminary constructions. Fix a constant $C \geq 1$, and let $B_{1},\ldots,B_{N}$, $N = N(\tau) \in \N$ be an enumeration of all the balls of radius $4(\tau/C)$, where the centre lies in a $(\tau/C)$-net of $\{p_{j}\}_{j \in \N} \subset B(0,5)$. Assume for a moment that the following holds for some point $q \in \He$:
\begin{equation}\label{form26} q \in E \cap B(0,5) \quad \text{and} \quad \dist(\Sigma_{0}(q),E) \geq \tau. \end{equation}
Then, if $C \geq 1$ was chosen large enough, there exists $j \in \{1,\ldots,N\}$ such that 
\begin{equation}\label{form19} B(q,\tau/C) \subset \tfrac{1}{2}B_{j} = B(p_{j},2(\tau/C)) \end{equation}
and
\begin{equation}\label{form18} \dist(\Sigma_{0}(B_{j}),E) \geq \frac{9\tau}{C}, \end{equation}
where $\Sigma_{0}(B_{j}) = \{\Sigma_{0}(p) : p \in B_{j}\}$. Simply pick a net point $p_{j}$ with $d(q,p_{j}) < \tau/C$; then the associated ball $B_{j} = B(p_{j},4(\tau/C))$ satisfies \eqref{form19}-\eqref{form18} if $C$ is large enough. Note that if $B_{j}$ satisfies \eqref{form19}-\eqref{form18}, then, using first that $q \in B_{j}$ and $\diam(B_{j}) = 8\tau/C$, and then \eqref{form18}, one gets
\begin{equation}\label{form27} \dist(\Sigma_{0}(B_{j}),B_{j}) \geq \dist(\Sigma_{0}(B_{j}),q) - \frac{8\tau}{C} \geq \frac{\tau}{C}. \end{equation}
In particular, \eqref{form27} implies that 
\begin{equation}\label{form28}\|p\| \geq \frac{\tau}{2C}, \qquad p \in B_{j} \cup \Sigma_{0}(B_{j}), \end{equation}
because otherwise $d(\Sigma_{0}(p),p) = d(\bar{p},p) \leq 2\|p\| < \tau/C$.
Only the balls $B_{j}$ arising from some $q$ as in \eqref{form26} will be of interest in the sequel, so the rest may now be discarded; in particular, one may assume that \eqref{form27}-\eqref{form28} holds for all the balls $B_{j}$ with $j \in \{1,\ldots,N\}$.

Next, for $j \in \{1,\ldots,N\}$ fixed, choose a smooth function $\tilde{\psi}^{j} \colon \He \to [0,1]$ which is supported on $\bar{B}_{j}$ and equals one on $\tfrac{1}{2}B_{j}$. Then, recalling \eqref{form27}-\eqref{form28}, and defining
\begin{displaymath} \psi(p) = \begin{cases} -\tilde{\psi}(\Sigma_{0}(p)), & p \in \Sigma(B_{j}),\\ \tilde{\psi}(p), & p \in \He \setminus \Sigma(B_{j}), \end{cases} \end{displaymath}
yields a smooth horizontally antisymmetric function $\psi$ which is non-negative outside $\Sigma_{0}(B_{j})$, and satisfies 
\begin{displaymath} \spt \psi \subset \bar{B}_{j} \cup \overline{\Sigma_{0}(B_{j})} \subset \He \setminus \{0\} \quad \text{and} \quad \psi^{j}(p) = 1 \text{ for } p \in \tfrac{1}{2}B_{j}.  \end{displaymath}

Next, moving towards \eqref{form17}, fix a ball $B$, centred at $E$ with radius $0 < r < R$, as in \eqref{form17}, where $E$ is not $\tau$-symmetric. Thus, there exist $q_{1},q_{2} \in E \cap B$ such that 
\begin{equation}\label{form24} \dist(\Sigma_{q_{1}'}(q_{2}),E) \geq \tau r \quad \text{ for all } q_{1}' \in E \cap B(q_{1},\tau r). \end{equation}
Let $k \in \Z$ be the least integer such that $2^{-k} \leq r$. Then, \eqref{form24} holds with $r$ replaced by $2^{-k}$. For notational convenience, I will assume that $r = 2^{-k}$: to be accurate, the reader should replace future occurrences of $r$ by $2^{-k}$.

Fix $q_{1}' \in E \cap B(q_{1},\tau r)$, and consider
\begin{equation}\label{form20} q := \delta_{r^{-1}}(q_{1}'^{-1} \cdot q_{2}) \quad \text{and} \quad \tilde{E} := \delta_{r^{-1}}(q_{1}'^{-1} \cdot E). \end{equation}
Then, note that $q \in \tilde{E} \cap B(0,5)$, and
\begin{displaymath} \Sigma_{0}(q) = \delta_{r^{-1}}(\overline{q_{1}'^{-1} \cdot q_{2}}) = \delta_{r^{-1}}(q_{1}'^{-1} \cdot \Sigma_{q_{1}'}(q_{2})). \end{displaymath}
By the definition of $\tilde{E}$, and \eqref{form24}, this implies that
\begin{displaymath} d(\Sigma_{0}(q),\tilde{E}) = d([\delta_{r^{-1}}(q_{1}'^{-1} \cdot \Sigma_{q_{1}'}(q_{2}))],[\delta_{r^{-1}}(q_{1}'^{-1} \cdot E)]) = \frac{\dist(\Sigma_{q_{1}'}(q_{2}),E)}{r} \geq \tau. \end{displaymath}
This means that the assumption \eqref{form26} is satisfied by the point $q$, and the set $\tilde{E}$ in place of $E$. Hence, there exists $j \in \{1,\ldots,N\}$ such that \eqref{form19}-\eqref{form18} hold (still with $\tilde{E}$ in place of $E$). Consider the associated function $\psi^{j}$. The next task will be to show that
\begin{equation}\label{form21} \int_{E} \psi^{j}_{2^{-k}}(q_{1}'^{-1} \cdot p) \, d\calH^{3}(p) = \int_{E} \psi_{r}^{j}(q_{1}'^{-1} \cdot p) \, d\calH^{3}(p) \gtrsim_{\tau} 1, \end{equation}
where, I recall, $\phi_{r}(p) = r^{-3}\phi(\delta_{r^{-1}}(p))$ for $p \in \He$. I first claim that the integrand in \eqref{form21} is non-negative for all $p \in E$. To see this, recall that $\psi^{j}$ is non-negative outside $\Sigma_{0}(B_{j})$. Then, 
\begin{displaymath} p \in E \quad \Longrightarrow \quad \delta_{r^{-1}}(q_{1}'^{-1} \cdot p) \in \tilde{E} \quad \stackrel{\eqref{form18}}{\Longrightarrow} \quad \delta_{r^{-1}}(q_{1}'^{-1} \cdot p) \notin \Sigma_{0}(B_{j}), \end{displaymath}
and this proves the claim by the definition of $\psi_{r}^{j}$. So, to prove \eqref{form18}, it suffices to show, by the $3$-regularity of $E$, that 
\begin{equation}\label{form22} \psi_{r}^{j}(q_{1}'^{-1} \cdot p) = r^{-3}, \qquad p \in B(q_{2},r\tau/C). \end{equation}
By \eqref{form19}, and the definition of $\psi^{j}$, one knows that $\psi^{j}(p) = 1$ as long as $d(p,q) \leq \tau/C$, so \eqref{form22} follows if one manages to check that
\begin{displaymath} d(\delta_{r^{-1}}(q_{1}'^{-1} \cdot p),q) \leq \frac{\tau}{C}, \qquad p \in B(q_{2},r\tau/C). \end{displaymath}
But this follows immediately from the definition of $q$ from \eqref{form20}:
\begin{displaymath} d(\delta_{r^{-1}}(q_{1}'^{-1} \cdot p),q) = d(\delta_{r^{-1}}(q_{1}'^{-1} \cdot p),\delta_{r^{-1}}(q_{1}'^{-1} \cdot q_{2})) = \frac{d(p,q_{2})}{r} \leq \frac{\tau}{C}, \quad p \in B(q_{2},r\tau/C). \end{displaymath}
This proves \eqref{form21}. Different choices of $q_{1}' \in E \cap B(q_{1},\tau r)$ -- as in \eqref{form24} -- may lead to different indices $j \in \{1,\ldots,N\}$, but in any case
\begin{equation}\label{form25} \sum_{j = 1}^{N} \int_{E} \psi^{j}_{2^{-k}}(q_{1}'^{-1} \cdot p) \, d\calH^{3}(p) \gtrsim_{\tau} 1, \qquad q_{1}' \in E \cap B(q_{1},\tau r) \subset E \cap 2B. \end{equation} 
Consequently, 
\begin{displaymath} \sum_{j = 1}^{N} \int_{E \cap 2B} \left| \int_{E} \psi_{2^{-k}}^{j}(q_{1}'^{-1} \cdot p) \, d\calH^{3}(p) \right|^{2} \, d\calH^{3}(q_{1}') \gtrsim \calH^{3}(E \cap B(q_{1},\tau r)) \sim_{\tau} \calH^{3}(E \cap B). \end{displaymath}
The proof above shows that this estimate holds for all balls $B$ which are centred on $E$, have radius in the interval $[2^{-k},2^{-k + 1})$, and with the property that $E$ is not $\tau$-symmetric in $B$. Covering the set $\{p \in B(p_{0},R) : E \text{ is not $\tau$-symmetric in } B(p,r)\}$ by such balls (with bounded overlap), this leads to the following estimate:
\begin{align*} \int_{2^{-k}}^{2^{-k + 1}} & \calH^{3}(\{p \in B(p_{0},R) : E \text{ is not $\tau$-symmetric in $B(p,r)$}\} \, \frac{dr}{r}\\
& \lesssim_{\tau} \sum_{j = 1}^{N} \int_{E \cap B(p_{0},2R)} \left| \int_{E} \psi^{j}_{2^{-k}}(q^{-1} \cdot p) \, d\calH^{3}(p) \right|^{2} \, d\calH^{3}(q). \end{align*}
Finally, summing up the intervals $[2^{-k},2^{-k + 1})$ intersecting $[0,R]$ gives
\begin{align*} \int_{0}^{R} & \calH^{3}(\{p \in B(p_{0},R) : E \text{ is not $\tau$-symmetric in } B(p,r)\}) \, \frac{dr}{r}\\
& \lesssim_{\tau} \sum_{j = 1}^{N} \sum_{2^{-k} \leq 2R} \int_{B(p_{0},2R)} \left| \int_{E} \psi_{2^{-k}}^{j}(q^{-1} \cdot p) \, d\calH^{3}(p) \right|^{2} \, d\calH^{3}(q) \, \frac{dr}{r} \lesssim R^{3}. \end{align*}
The last estimate, of course, uses the assumption (C2). The proof of of Theorem \ref{C1ImpliesWGL} is complete.
\end{proof}

Now the main result, Theorem \ref{main}, follows immediately:

\begin{proof}[Proof of Theorem \ref{main}] Combine Theorem \ref{C1ImpliesWGL} and Proposition \ref{LSImpliesWGL}. \end{proof}

\bibliographystyle{plain}
\bibliography{references}

\end{document}